\newtheorem{theorem}{Theorem}[section]
\newtheorem{proposition}[theorem]{Proposition}
\newtheorem{lemma}[theorem]{Lemma}
\newtheorem{corollary}[theorem]{Corollary}
\newtheorem{conjecture}[theorem]{Conjecture}
\newtheorem{thmintro}{Theorem}
\DeclareMathOperator{\Rep}{\mathrm{rdim}}
\DeclareMathOperator{\Repcat}{\mathrm{rdim}}
\DeclareMathOperator{\coRepcat}{\mathrm{rcodim}}
\theoremstyle{definition}
\newtheorem{definition}[theorem]{Definition}
\newtheorem{remark}[theorem]{Remark}
\newtheorem{example}[theorem]{Example}
\newtheorem{question}[theorem]{Question}
\DeclareMathOperator{\Chow}{Chow}
\DeclareMathOperator{\SmProj}{SmProj}
\newcommand{\op}{\mathsf{op}}
\newcommand{\sod}[1]{\langle #1 \rangle}
\newcommand{\too}{\longrightarrow}
\newcommand{\Db}{{\rm D}^{\rm b}}
\newcommand{\Aut}{{\rm Aut}}
\newcommand{\Br}{{\rm Br}}
\newcommand{\Hom}{{\rm Hom}}
\newcommand{\Spec}{{\rm Spec}}
\renewcommand{\dim}{{\rm dim}\,}
\newcommand{\dgcat}{\mathsf{dgcat}}
\newcommand{\id}{{\rm id}}
\newcommand{\cal}{\mathcal}
\newcommand{\ka}{{\cal A}}
\newcommand{\kb}{{\cal B}}
\newcommand{\kc}{{\cal C}}
\newcommand{\ko}{{\cal O}}
\newcommand{\NN}{\mathbb{N}}
\newcommand{\ZZ}{\mathbb{Z}}
\newcommand{\QQ}{\mathbb{Q}}
\newcommand{\CC}{\mathbb{C}}
\newcommand{\LL}{\mathbb{L}}
\newcommand{\PP}{\mathbb{P}}
\newcommand{\Bir}{\mathrm{Bir}}
\newcommand{\linedef}[1]{\textit{#1}}
\begin{document}
\title[Categorical dimensions and filtrations of Cremona groups]{Categorical dimension of birational transformations and filtrations of Cremona groups}

\author{Marcello Bernardara}
\address{Institut de Math\'ematiques de Toulouse \\ %
Universit\'e Paul Sabatier \\ %
118 route de Narbonne \\ %
31062 Toulouse Cedex 9\\ %
France}
\email{marcello.bernardara@math.univ-toulouse.fr}

\begin{abstract}
Using a filtration on the Grothendieck ring of triangulated categories, we define the motivic categorical dimension
of a birational map between smooth projective varieties. We show that birational transformations of bounded motivic categorical
dimension form subgroups, which provide a nontrivial filtration of the Cremona group. We discuss
some geometrical aspect and some explicit example.
We can moreover define, in some cases, the genus of a birational transformation, and compare it to the one defined by Frumkin
in the case of threefolds.
\end{abstract}

\maketitle

\section{Introduction}

In the last decades, derived categories of coherent sheaves and their
semiorthogonal decompositions attracted a growing amount of interest, thanks, among other features,
to the conjectural interplay with birationality questions.
Since the seminal work of Bondal and Orlov \cite{bondal_orlov:ICM2002},
examples and ideas based on the motivic behavior of semiorthogonal decompositions have lead
to formulate some natural questions about the possibility to define a birational invariant, or, at least, to
obtain necessary conditions for rationality of a given variety from semiorthogonal
decompositions of its derived category. We refer to \cite{kuz:rationality-report} and
\cite{auel-berna-frg} for recent reports on motivations, open questions, conjectures, technical problems,
and comparison to other theories.

In an effort to understand the above questioning, and inspired by the classical
notion of representability of Chow groups, the definition of \linedef{categorical representability}
of a smooth projective variety $X$ was given in \cite{bolognesi_bernardara:representability}.
Roughly speaking, such an $X$ is categorically representable in dimension $m$ if the derived category $\Db(X)$
admits a semiorthogonal decomposition whose components can be realized in semiorthogonal decompositions
of varieties of dimension at most $m$. Based on blow-up formulas and Hironaka resolution of
singularities, the upshot of this definition is to understand whether
being representable in codimension $2$ is a necessary condition for rationality.

\smallskip

A fundamental invariant one can consider to study the birational geometry of a given variety $X$ is the group
of birational transformations $\mathrm{Bir}(X)$. The definitions and the results presented here
came out of an attempt to understand the interplay between semiorthogonal decompositions
and the group $\mathrm{Bir}(X)$.
The guiding idea is the possibility to define, basing upon weak factorization and categorical representability,
the notion of \linedef{motivic categorical dimension} of a given birational map 
$\phi: X \dashrightarrow Y$, see Definition \ref{def:cat-dim-of-birmap}. This is, to the best of the author's knowledge, a first, though
very little, attempt to obtain informations on $\mathrm{Bir}(X)$ using semiorthogonal decompositions, while
their interplay with other commutative birational invariants is
already been treated (see \cite{auel-berna-frg} for a recent report) and sometimes even well understood
(see \cite{at12} for the Hodge theory of cubic fourfolds, or
\cite{berna-tabuada-jacobians} for intermediate Jacobians).

Let us resume the circle of ideas leading to Definition \ref{def:cat-dim-of-birmap}.
Given a weak factorization of $\phi$, one has to consider the centers of all the blow-ups involved,
and calculate the maximal value of categorical representability of such centers. One would like to define
the categorical dimension of $\phi$ to be the minimum of such values on all the possible weak factorizations
of $\phi$. However, one of the main technical (potential) obstructions to have this value well-behaved is the lack of
Jordan-H\"older property for semiorthogonal decompositions, see \cite{kuznet:JH} or \cite{boeh-graf-sos-JH}.
In particular, if we used such a definition, we would not be able to compare the dimensions of $\phi$ and $\phi^{-1}$,
a central result in order to have a group filtration on $\mathrm{Bir}(X)$.

In order to obtain a well-behaved definition, we need to work in the Grothendieck rings $PT(k)$
and $K_0(\mathrm{Var}(k))$ of dg categories
and, respectively, varieties over $k$. Roughly speaking, in these rings, semiorthogonal decompositions and, respectively,
blow-ups turn into sums. Moreover, categorical representability induces a ring filtration $PT_*(k)$ on $PT(k)$.
Bondal, Larsen and Lunts \cite{bondal-larsen-lunts} defined a \linedef{motivic measure} (i.e. a ring
homomorphism) $\mu: K_0(\mathrm{Var}(k)) \to PT(k)$ by sending $[X]$ to the class of $\Db(X)$ for any
smooth projective variety $X$. We define then the \linedef{motivic categorical dimension} of $X$ to be the smallest
$d$ such that $\mu([X])$ lies in $PT_d(k)$. This value is bounded above by the categorical representability of
$X$, but it is not known if they always coincide.
Given a birational map $\phi: X \dashrightarrow Y$, replacing categorical representability by motivic categorical dimension,
we define the \linedef{motivic categorical dimension} of $\phi$ as we sketched above: running through all possible weak
factorizations, we take the minimum value of the maximal motivic categorical dimension of the centers involved
in the factorization.
In the case where $X=Y$ this gives a filtration
of $\mathrm{Bir}(X)$ by setting $\mathrm{Bir}_d(X)$ to be the subset of birational maps of
motivic categorical dimension at most $d$. By setting the motivic categorical dimension of the trivial
category to be $-1$, we have that $\Bir_d(X)$ is defined and not empty for $d \geq -1$ any integer.
Since our definitions assume the existence of a weak factorization for birational maps, we work over a field $k$ of
characteristic zero, where every birational map admits such a factorization \cite{abrmovichandco-weak}.

\begin{thmintro}\label{thm:intromain}
Let $X$ be smooth and projective variety, then $\mathrm{Bir}_d(X)$ is a subgroup, which coincides with the whole
group if $d \geq \dim(X)-2$ and with $\mathrm{Aut}(X)$ if $d=-1$.
\end{thmintro}

A slightly more general version of Theorem \ref{thm:intromain} will be stated and proved as Theorem \ref{thm:main}.

\medskip

Let us illustrate some geometric applications of Theorem \ref{thm:intromain}
in the most interesting cases, that is $X=\PP^n$. We will make use of arguments based on Hodge theory
via Hochschild homology, and we work out then explicit examples for the case of varieties defined over an algebraically
closed subfield $k \subset \CC$. First of all, using Hochschild homology, we can prove that we actually defined a proper
filtration.

\begin{thmintro}\label{thm:introfiltraisgood}
Let $n \geq 3$, and assume $k \subset \CC$ is algebraically closed. Then $\Bir_{d}(\PP^n) \neq \Bir_{d+1}(\PP^n)$ for all $-1 \leq d < n-2$.
\end{thmintro}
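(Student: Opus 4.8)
The plan is to show that each inclusion $\Bir_{d+1}(\PP^n) \subset \Bir_d(\PP^n)$ is strict by exhibiting, for every $d$ with $0 \le d < n-2$, an explicit birational automorphism $\phi$ of $\PP^n$ whose categorical dimension is exactly $d+1$. The natural candidates are obtained from blow-ups/blow-downs along a center $Z$ of dimension $d+1$ which has positive motivic categorical dimension $d+1$ — the simplest choice being a smooth projective variety $Z$ of dimension $d+1$ with $\mathrm{H}\Omega_*(\Db(Z)) \neq 0$ in a degree that forces $\mu([Z]) \notin PT_d(k)$, e.g. a curve of positive genus when $d+1=1$, an abelian variety, or more generally a variety with nontrivial odd Hochschild homology. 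To build an honest element of $\Bir(\PP^n)$ one embeds $Z$ into $\PP^n$ and uses a Cremona-type involution or a ``blow up $Z$ then blow down a different divisor'' construction (a flip-type or a pseudo-isomorphism realised inside $\PP^n$), so that $\phi$ is a birational self-map of $\PP^n$ all of whose weak factorizations must involve a center birational to (a subvariety dominating) $Z$.

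First I would make precise the Hochschild-homology obstruction: the motivic measure $\mu$ composed with the Hochschild homology functor gives an additive invariant on $K_0(\mathrm{Var}(k))$, and the filtration piece $PT_d(k)$ is generated by classes of varieties of dimension $\le d$, whose Hochschild homology is concentrated in degrees $[-d,d]$ (via the Hochschild–Kostant–Rosenberg decomposition $\HH_i(\Db(W)) = \bigoplus_p H^{p-i}(W,\Omega^p_W)$, nonzero only for $|i|\le \dim W$). Hence if $Z$ of dimension $d+1$ has $\HH_{d+1}(\Db(Z)) = H^0(Z,\Omega^{d+1}_Z) \neq 0$ (take $Z$ with a nonzero global top form, e.g. an abelian variety or any variety with trivial canonical bundle of the right dimension), then $\mu([Z]) \notin PT_d(k)$, so the motivic categorical dimension of $Z$ is exactly $d+1$. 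This is where the hypothesis $k \subset \CC$ algebraically closed is used: it guarantees such $Z$ exist over $k$ in every dimension $d+1 \le n-2$ and that HKR applies.

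Next I would show the categorical dimension of the constructed $\phi$ equals $d+1$. The upper bound is immediate from the explicit factorization through the blow-up of $Z$. For the lower bound, suppose for contradiction that $\phi \in \Bir_d(\PP^n)$; then some weak factorization of $\phi$ has all centers of motivic categorical dimension $\le d$. Now I would run the argument behind Theorem \ref{thm:main}: the class $\mu([\PP^n]) - \mu([\PP^n]) = 0$ obtained by going around the factorization forces a cancellation in $PT(k)$ among the classes $\mu([\text{centers}])$ and their projective-bundle contributions; tracking this through $PT(k)/PT_d(k)$ and the $\HH_{d+1}$-component, one sees that the $Z$-contribution (which is nonzero mod $PT_d(k)$) cannot be cancelled by anything coming from centers of dimension $\le d$ or from $\PP^n$ itself (whose Hochschild homology is all even, indeed Hodge–Tate). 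This yields the contradiction. The main obstacle I anticipate is precisely this last bookkeeping: making the cancellation argument in $PT(k)$ robust enough that a center $Z$ occurring in the specific factorization of $\phi$ cannot be ``hidden'' inside a cleverer weak factorization whose centers are all of dimension $\le d$ but whose iterated blow-up contributions conspire to reproduce $\mu([Z])$ modulo $PT_d(k)$ — one must check that no sum of classes of the form $[\text{dim} \le d \text{ variety}] \cdot [\PP^r]$ can have the same image as $[Z]$ in the relevant graded piece, which again comes down to the Hochschild-homology degree count but must be argued carefully using that blowing up a smooth center $W \subset V$ contributes $\mu([W]) \cdot (\mu([\PP^{c-1}]) - 1)$ with $\mu([\PP^{c-1}])$ Hodge–Tate, so all new contributions have Hochschild homology in degrees $\le \dim W \le d$.

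Finally, I would record that the element $\phi$ can be taken genuinely in $\Bir(\PP^n)$ rather than in some ambient birational category — e.g. by choosing $Z$ to be cut out inside $\PP^n$ in such a way that a standard (projective, or determinantal, or join-type) Cremona transformation contracts a divisor containing $Z$ and the inverse extracts it — and conclude $\phi \in \Bir_{d+1}(\PP^n) \setminus \Bir_d(\PP^n)$, proving $\Bir_d(\PP^n) \neq \Bir_{d+1}(\PP^n)$ for all $d < n-2$.
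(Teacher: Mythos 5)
Your high-level strategy is the same as the paper's: exhibit, for each $d<n-2$, an explicit $\phi\in\Bir(\PP^n)$ that contracts a $(d+1)$-dimensional variety $Z$ with $h^{d+1,0}(Z)\neq 0$, and use Hochschild homology (via HKR, hence the hypothesis $k\subset\CC$ algebraically closed) to deduce $\mathrm{mcd}(Z)=d+1$, giving the lower bound $c\dim(\phi)\geq d+1$. But there are two genuine gaps.

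First, the construction of $\phi$ is not actually given. You suggest taking $Z$ to be ``an abelian variety or any variety with trivial canonical bundle'' and then embedding it into $\PP^n$ and performing ``a Cremona-type involution or a blow up $Z$ then blow down a different divisor construction (a flip-type or a pseudo-isomorphism realised inside $\PP^n$)''. A flip or small modification is not a birational self-map of $\PP^n$; and there is no reason a smooth $(d+1)$-dimensional abelian variety embeds in $\PP^n$ (e.g.\ in codimension $2$ this quickly becomes impossible). The paper instead writes down an explicit Cremona involution $\phi_{n-2}([x_0:\cdots:x_n])=[f:x_0x_1g:\cdots:x_0x_ng]$ with $f$ a general degree-$n$ form in $x_1,\ldots,x_n$ cutting out a Calabi--Yau hypersurface $X$ inside the hyperplane $x_0=0$, and for smaller $d$ conjugates the analogous involution of $\PP^{d+3}$ by the standard birational map $\PP^n\dashrightarrow\PP^{d+3}\times\PP^{n-d-3}$. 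This gives an honest element of $\Bir(\PP^n)$ whose base locus visibly contains a Calabi--Yau of the right dimension.

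Second, and more seriously, your argument for the lower bound is broken. You propose to assume $\phi\in\Bir_d(\PP^n)$ and derive a contradiction by a ``cancellation in $PT(k)$'' argument: applying the motivic measure to a single weak factorization as in the proof of Theorem~\ref{thm:main} and inspecting the resulting identity in $PT(k)/PT_d(k)$ or in $\HH_{d+1}$. But the identity \eqref{eq:withoutI} only relates the $B_{i,j}$'s and $C_{i,j}$'s of \emph{that one} factorization; if all the $C_{i,j}$ of the hypothetical factorization already have $\mathrm{mcd}\le d$, both sides lie in $PT_d(k)$ and there is nothing to contradict --- your $Z$ simply does not appear in this equation. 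The $K$-theoretic bookkeeping cannot, on its own, force $Z$ into every factorization. What is needed is a geometric input: $\phi$ contracts a divisor $E$ birational to $X\times\PP^1$ (or $X\times\PP^{n-d-2}$ in the product case); hence in any weak factorization some blow-down must produce an exceptional divisor birational to $E$, so some center $C_{i,j}$ satisfies $C_{i,j}\times\PP^l\sim E$ birationally. One then invokes the theorem of Liu--Sebag \cite[Thm.~2]{liu-sebag} (combined with $\kappa(X)=0$, respectively with the birational invariance of $h^{*,0}$ via K\"unneth) to conclude $h^{d+1,0}(C_{i,j})\neq 0$ and hence $\mathrm{mcd}(C_{i,j})\geq d+1$ by Corollary~\ref{cor:maximal-motivic-dimension}. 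This identification of contracted divisors with centers, and the Liu--Sebag step, are the crux of the proof and are entirely absent from your sketch.
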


Theorem \ref{thm:introfiltraisgood} will be stated and proved as Theorem \ref{prop:arealfiltration}.

Given a birational map, being able to provide a weak factorization should give interesting informations
on the birational map itself. As potential applications of Theorem \ref{thm:introfiltraisgood}, one could try to show that
a given class of birational maps does not generate the whole Cremona group in dimension $n$, by showing that the motivic 
categorical dimension of such maps must be bounded above by $n-3$ or less.
For example, using more refined motivic arguments, we can see that if a birational map
$\phi: X \dashrightarrow X$ admits a weak factorization with rational centers, then $\phi$ belongs to
$\mathrm{Bir}_{n-4}(X)$, see Corollary \ref{cor:rational-centers-dimn-4}.
Finally, if such a $\phi$ admits a weak factorization whose centers
are all (abstractly isomorphic to) toric varieties, one can use a result of Kawamata \cite{kawamata-toric}
to show that $\phi$ belongs to $\Bir_0(X)$. 
It follows that the subgroup of $\Bir(\PP^n)$ generated by the
standard Cremona transformation and transformations is a subgroup of $\Bir_0(\PP^n)$.
Notice that $\Bir_0(\PP^n) \subset \Bir_{n-4}(\PP^n)$ is strict as soon as $n \geq 5$, and that there exist rational
varieties of dimension at least three with positive motivic categorical dimension.

\smallskip

In \cite{frumkin}, Frumkin defined a filtration of the group $\Bir(X)$ for a uniruled complex threefold
$X$. This is done by defining the \linedef{genus} of a birational map $\phi$ to be the maximum of the genera among
the centers of the blow-ups in regular resolutions of $\phi^{-1}$. As shown by Lamy \cite{lamy-genus},
this is exactly the same as the maximum of the genera of irreducible divisors contracted by $\phi$. 
Using the theory of noncommutative motives and the fact, proved in \cite{berna-tabuada-jacobians},
that one can reconstruct intermediate Jacobians and their polarizations via semiorthogonal decompositions,
we can give an alternative definition of the genus of a birational maps $\phi: X \dashrightarrow X$ in the case where
both $X$ and all the centers of a weak factorization of $\phi$ have, roughly speaking, well-behaved
principally polarized intermediate Jacobians. This can be compared to Frumkin's definition,
see Proposition \ref{prop:frumkin}, and has interesting potential interplays with it.

\smallskip

We conclude by recalling that Dmitrov, Haiden, Katzarkov and Konstevich have defined in
\cite{DHKK} the notion of entropy for an endofunctor of a triangulated category. On the
other hand, the entropy of a birational map is a well-known, very interesting object of study.
As asked by the above authors, it would be very interesting to understand if and how these two notions
can be related to each other \cite[\S 4.3]{DHKK}. Here, noncommutative
methods are used to produce a filtration of the group $\Bir(X)$. It is not clear to the author,
whether knowing the motivic categorical dimension of a birational map could give any information
on its topological entropy.

\medskip

We work exclusively over a field $k$ of characteristic zero to ensure the existence
of weak factorizations \cite{abrmovichandco-weak}.

\subsection*{Acknowledgements}
This project started when I learnt about Frumkin's definition of genus of
a birational map. I thank St\'ephane Lamy to let me discover this notion, and himself and Massimiliano Mella
for helpful discussions and comments. I am very
grateful to J\'er\'emy Blanc for pointing out a gap in an argument in an early version,
and for discussions and examples, and to the anonymous referees for useful comments
and remarks.

\section{Noncommutative tools}
\label{subsec:cat_rep}

In this section we aim to give a short introduction to the noncommutative tools that are
actively involved in the next: the Grothendieck ring of triangulated categories and its filtration, and
Hochschild homology. We assume the reader to be
familiar with notions such as semiorthogonal decompositions and noncommutative motives;
references can be found in \cite{auel-berna-frg} and \cite{tabuada-book} respectively.

\subsection{Categorical representability}
Using semiorthogonal decompositions, one can define a notion of
\linedef{categorical representability} for triangulated categories. In
the case of smooth projective varieties, this is inspired by the
classical notions of representability of cycles, see
\cite{bolognesi_bernardara:representability}.

\begin{definition}
\label{def-rep-for-cat}
A triangulated category $\ka$ is \linedef{representable
in dimension $m$} if it admits a semiorthogonal decomposition
$$
\ka = \langle \ka_1, \ldots, \ka_r \rangle,
$$
and for each $i=1,\ldots,r$ there exists a smooth projective
$k$-variety $Y_i$ with $\dim Y_i \leq m$, such that $\ka_i$ is
equivalent to an admissible subcategory of $\Db(Y_i)$.

We use the following notation
$$\Rep(\ka) := \min \{ m \in \NN \, \vert \, \ka \text{ is representable in dimension } m\},$$
whenever such finite $m$ exists.
\end{definition}

\begin{lemma}\label{lem:dim0dim1}
Let $\ka$ be triangulated category.
\begin{enumerate}
 \item $\Rep\ka=0$ if and only if there exists a semiorthogonal
decomposition $$
\ka = \langle \ka_1, \ldots, \ka_r \rangle,
$$
such that for each $i$, there is a $k$-linear equivalence $\ka_i
\simeq \Db(K_i/k)$ for a separable field extension $K_i/k$.

\item $\Rep\ka \leq 1$ if and only if $\ka$ admits
a semiorthogonal decomposition whose components belong to the following list:

\begin{itemize}
 \item categories representable in dimension 0, or
 \item categories of the form $\Db(k,\alpha)$, for $\alpha$ in $\Br(k)$ the Brauer class of a conic, or
 \item categories equivalent to $\Db(C)$ for some smooth $k$-curve $C$.
\end{itemize}

\end{enumerate}

\end{lemma}

\begin{proof}
The Lemma combines \cite[Prop. 6.1.6, Prop. 6.1.10]{auel-berna-frg}.
\end{proof}

\begin{example}[Bondal-Kuznetsov's counterexample]\label{ex:bond-kuz}
Consider a line $l$ in $\PP^3$. On the blow-up of $\PP^3$ along $l$, consider a smooth rational curve $C$ intersecting
the exceptional divisor in two distinct points. Set $X$ to be the iterated blow-up of $\PP^3$ along $l$ and $C$.
As shown by Kuznetsov \cite{kuznet:JH}, the category $\Db(X)$ contains an admissible subcategory $\kb$ which
cannot be generated by exceptional objects, and in particular we have $\Rep(\kb)>0$ (by Lemma \ref{lem:dim0dim1}).
On the other hand,
the description of $X$ as an iterated blow-up of $\PP^3$ along rational curves provides a full exceptional
collection for $X$, so that $\Rep(X)=0$. It follows, that $\Rep$ is not a monotone function with respect to admissible
embeddings.
\end{example}

\begin{definition}
\label{def-cat-rep}
Let $X$ be a smooth projective $k$-variety. We say that $X$
is \linedef{categorically representable} in dimension $m$ (or
equivalently in codimension $\dim(X)-m$) if $\Db(X)$ is representable in
dimension $m$.

We will use the following notations:
$$
\Repcat(X) := \Rep(\Db(X)), \,\,\,\,\,\, \coRepcat(X):= \dim(X)-\Rep(\Db(X)),
$$
and notice that they are both nonnegative integer numbers.
\end{definition}

\begin{lemma}\label{lem:trivial-bound}
Let $X$ be a smooth projective $k$-variety of dimension $n$. Then
$\Repcat(X) \leq n$ and $\coRepcat(X)\geq 0$.
\end{lemma}
\begin{proof}
This is immediate by taking the trivial semiorthogonal decomposition
$\Db(X) = \sod{\Db(X)}$.
\end{proof}

\begin{remark}
The category $\kb$ from Example \ref{ex:bond-kuz} is not
of the form $\Db(X)$ for any smooth and projective variety $X$. Hence, the question
whether $\Repcat$ is monotone on the set of smooth and projective varieties 
with respect to fully faithful functors $\Db(X) \to \Db(Y)$ is open.
\end{remark}

\subsection{The Grothendieck ring of pretriangulated dg categories and its filtration}\label{sect:grothring-noncomm}

We sketch Bondal-Larsen-Lunts construction of the Grothendieck ring of $k$-linear smooth and proper pretriangulated dg categories \cite{bondal-larsen-lunts}.
Recall that a $k$-linear dg category is a category enriched over the category of complexes of $k$-vector spaces and that functors between such
categories are hence enriched functors.
Recall that if $\ka$ is a pretriangulated dg category, then its homotopy category $H^0(\ka)$ is the category with the same objects and whose morphisms
are obtained by taking the $0$-th cohomology of the dg complex. Hence $H^0(\ka)$ is triangulated and $k$-linear.
A semiorthogonal decomposition $\ka=\sod{\kb,\kc}$ of such an $\ka$ is given by a pair of pretriangulated dg subcategories such that
$H^0(\ka)=\sod{H^0(\kb),H^0(\kc)}$ is a semiorthogonal decomposition of the underlying homotopy triangulated category. 
Let $\ZZ[\dgcat]$ be the free $\ZZ$-module generated by equivalence classes of smooth and proper pretriangulated dg categories, and introduce the equivalence relation
generated by:
\begin{equation}\label{eq:def-of-groth-ring}
\begin{array}{lr}
\ka \sim \kb + \kc & \text{if }\,\, \ka = \langle \kb, \kc \rangle.
\end{array}
\end{equation}
We denote the quotient group by $I:\ZZ[\dgcat] \to PT(k)$ (see \cite[\S 5.1]{bondal-larsen-lunts}).
For a smooth projective variety $X$, notice that there is a unique dg enhancement of $\Db(X)$ (see \cite{LO}). We will
often use the notation $I(X)$ for the class $I(\Db(X))$ in $PT(k)$.

\begin{definition}
We set 
$PT_{eff}(k):=I(\ZZ_{\geq 0}[\dgcat])$, and we
say that an element $a$ of $PT(k)$ is \linedef{effective} if it belongs to $PT_{eff}(k)$.
\end{definition}

\begin{remark}
Note that $a$ in $PT(k)$ is effective if and only if there exists a smooth and proper dg category $\ka$ such that $a=I(\ka)$. Indeed,
by definition we can write $a = \sum_{i=1}^r I(\ka_i)$, so that we can set $\ka=\bigoplus_{i=1}^r \ka_i$.
\end{remark}

In the additive commutative group $PT(k)$,
define the following associative product:
\begin{equation}\label{eq:def-of-prod-pt}
I(\ka) \bullet I(\kb)  = I(\ka \otimes \kb).
\end{equation}

\begin{proposition}[\cite{bondal-larsen-lunts}, Cor. 5.7]
The group $PT(k)$ endowed with the product $\bullet$ is
a commutative associative ring with unit $1 = I(\Spec(k))$.
\end{proposition}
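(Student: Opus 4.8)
The plan is to verify that the rule $I(\ka)\bullet I(\kb):=I(\ka\otimes\kb)$, extended $\ZZ$-bilinearly from generators to the free module, descends to the quotient $PT(k)$, and then to read off commutativity, associativity and the unit axiom from the monoidal structure on pretriangulated dg categories. For the descent, by $\ZZ$-bilinearity it suffices to fix a pretriangulated dg category $\kd$ and check that left and right multiplication by $I(\kd)$ respect the single defining relation \eqref{eq:def-of-groth-ring}. Concretely, given a semiorthogonal decomposition $\ka=\sod{\kb,\kc}$, I must establish
$$ \ka\otimes\kd=\sod{\kb\otimes\kd,\,\kc\otimes\kd}\qquad\text{and}\qquad \kd\otimes\ka=\sod{\kd\otimes\kb,\,\kd\otimes\kc}; $$
these identities give $I(\ka)\bullet I(\kd)=I(\kb)\bullet I(\kd)+I(\kc)\bullet I(\kd)$ and its mirror, hence a well-defined product on $PT(k)$.

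The technical heart is therefore the statement that tensoring a semiorthogonal decomposition with a fixed dg category is again one. Viewing $\kb,\kc$ as full pretriangulated subcategories of $\ka$, their tensor products with $\kd$ embed into $\ka\otimes\kd$, and their objects generate it as a pretriangulated category: objects of $\ka\otimes\kd$ are built from pure tensors $x\otimes z$ by shifts and cones, and each $x\in\ka$ lies in a triangle with vertices in $\kb$ and $\kc$, which remains a triangle after $(-)\otimes z$. Semiorthogonality is seen through the Künneth isomorphism for morphism complexes,
$$ \RHom_{\ka\otimes\kd}(x\otimes z,\,x'\otimes z')\;\simeq\;\RHom_{\ka}(x,x')\otimes_k\RHom_{\kd}(z,z'), $$
valid because $\otimes$ is the derived tensor product over $k$ (work with cofibrant models, or in $\Hmo$). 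Taking $x\in\kc$ and $x'\in\kb$ annihilates the first tensor factor, hence the whole complex, and since $\RHom(-,-)$ is cohomological in each variable this vanishing propagates to the pretriangulated closures; thus $\Hom_{\ka\otimes\kd}(\kc\otimes\kd,\,\kb\otimes\kd)=0$. Admissibility of the two components is inherited from that of $\kb,\kc\subset\ka$ by tensoring their adjoint (projection/gluing) functors with $\id_{\kd}$. The argument for $\kd\otimes\ka$ is symmetric.

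Granting this, the ring axioms are formal. The symmetry and associativity constraints of the monoidal structure give quasi-equivalences $\ka\otimes\kb\simeq\kb\otimes\ka$ and $(\ka\otimes\kb)\otimes\kc\simeq\ka\otimes(\kb\otimes\kc)$, so $\bullet$ is commutative and associative on generators; as commutativity is a $\ZZ$-bilinear and associativity a $\ZZ$-trilinear identity, both extend to all of $PT(k)$. Since $\Db(\Spec(k))\simeq\perf(k)$ is the monoidal unit, $\perf(k)\otimes\ka\simeq\ka$, which gives $I(\Db(\Spec(k)))\bullet I(\ka)=I(\ka)$ on generators and then $1\bullet a=a$ for every $a\in PT(k)$.

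The main obstacle is the middle step, and within it the insistence on using the \emph{derived} tensor product: only then is $\otimes$ homotopy invariant (so that it is defined on quasi-equivalence classes, i.e. on the generators of the free module) and only then does the Künneth formula hold, which is precisely what forces semiorthogonality to be preserved. One must also ensure $\ka\otimes\kd$ is again pretriangulated — passing to its pretriangulated hull if necessary — so that the product stays within the class of objects generating $PT(k)$. Everything beyond this is bookkeeping with the coherence isomorphisms of the symmetric monoidal structure.
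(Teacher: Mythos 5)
The paper does not prove this proposition: it cites it verbatim as Corollary~5.7 of Bondal--Larsen--Lunts, so there is no in-paper argument to compare against. Your reconstruction is, in substance, the argument that appears in that reference: the entire content is the stability statement that a semiorthogonal decomposition $\ka=\sod{\kb,\kc}$ induces $\ka\otimes\kd=\sod{\kb\otimes\kd,\kc\otimes\kd}$, which you correctly isolate as the technical heart (this is BLL's key lemma preceding the corollary), and the descent of $\bullet$ to the quotient plus commutativity, associativity and the unit then follow formally by $\ZZ$-(bi/tri)linearity from the symmetric monoidal structure, exactly as you say. Your treatment of the generation step (tensoring the decomposition triangle of each $x\in\ka$ with $z\in\kd$) and of semiorthogonality via the K\"unneth formula for Hom complexes, propagated to the pretriangulated closures by cohomological functoriality, is the right argument.

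Two small remarks. First, you can state the K\"unneth step more cheaply in this setting: the ground ring $k$ is a field, so Hom complexes are complexes of $k$-vector spaces and the naive tensor product of dg categories already computes the derived one; the appeal to cofibrant models is safe but not strictly necessary here, and BLL work over a field precisely to sidestep it. Second, the phrase ``$\Db(\Spec(k))\simeq\perf(k)$ is the monoidal unit'' should be read up to the closure operation built into your $\otimes$: the naive tensor unit of $\dgcat$ is the one-object dg category $k$, and it is only after passing to the pretriangulated (or perfect) hull that $k\otimes\ka$ becomes identified with $\ka$. You do flag the need to take the pretriangulated hull, so this is just a matter of saying explicitly that the unit statement is taken in the same homotopy/Morita category where the hull is applied. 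With those caveats, your proposal is a faithful outline of the cited proof.
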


The notion of categorical representability induces a ring filtration on
$PT(k)$, as follows.

\begin{definition}
Let $d$ be a nonnegative integer. We set 
$$PT_d(k) := \sod{ a \in PT_{eff}(k) \vert \text{ there exists } \ka \text{ with } \Rep(\ka)\leq d \text{ and } a \text{ is a summand of } I(\ka)}$$
to be the 
additive subgroup generated by effective summands of elements of the
form $I(\ka)$ with $\Rep(\ka)\leq d$.
\end{definition}

We notice that this definition is
slightly different from the one introduced in \cite[\S 8.1]{auel-berna-frg}.

\begin{proposition}\label{prop:the-filtration}
The subsets $PT_i(k)$ give a filtration on the ring $PT(k)$.
More precisely, suppose that $a$ is in $PT_i(k)$
and $b$ is in $PT_j(k)$. Then
$$\begin{array}{l}
a+b \,\, \text{ is in } PT_{\mathrm{max}(i,j)}(k),\\
a \bullet b \,\, \text{ is in } PT_{i+j}(k).
  \end{array}$$
\end{proposition}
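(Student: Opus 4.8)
The plan is to dispatch the two additive assertions by unwinding the definition, and to reduce the multiplicative assertion to a single lemma about tensor products of representable categories. By construction $PT_i(k)$ is the set of $\ZZ$-linear combinations of a fixed family of classes of pretriangulated dg categories, hence an additive subgroup of $PT(k)$; this is the last clause of the statement and needs nothing more. For the behaviour under $+$, the key observation is that this generating family is nondecreasing in the index, since $\Rep(\kb)\leq i$ forces $\Rep(\kb)\leq j$ whenever $i\leq j$: every witness $I(\kb)=I(\ka)+\sum_l I(\ka_l)$ certifying that a class belongs to the generating family of $PT_i(k)$ also certifies it for $PT_j(k)$. Thus $PT_i(k)\subseteq PT_j(k)$ for $i\leq j$, so if $a\in PT_i(k)$ and $b\in PT_j(k)$ with, say, $i\leq j$, then $a,b\in PT_{\max(i,j)}(k)$, and since the latter is a subgroup, $a+b\in PT_{\max(i,j)}(k)$.

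The multiplicative assertion rests on the following lemma, which I would state and prove first: if $\kc$ and $\kd$ are pretriangulated dg categories with $\Rep(\kc)\leq i$ and $\Rep(\kd)\leq j$, then $\Rep(\kc\otimes\kd)\leq i+j$. To prove it, choose semiorthogonal decompositions $\kc=\sod{\kc_1,\ldots,\kc_r}$ and $\kd=\sod{\kd_1,\ldots,\kd_s}$ together with admissible embeddings $\kc_a\hookrightarrow\Db(Y_a)$ and $\kd_b\hookrightarrow\Db(Z_b)$, with $Y_a,Z_b$ smooth projective, $\dim Y_a\leq i$ and $\dim Z_b\leq j$. Tensoring the two decompositions yields a semiorthogonal decomposition of $\kc\otimes\kd$ whose components are the $\kc_a\otimes\kd_b$ in a suitable (e.g.\ lexicographic) order: semiorthogonality follows from a Künneth computation of the $\Hom$-complexes, and each $\kc_a\otimes\kd_b$ is admissible in $\Db(Y_a)\otimes\Db(Z_b)$ because the adjoints of $\kc_a\hookrightarrow\Db(Y_a)$ and $\kd_b\hookrightarrow\Db(Z_b)$ tensor to adjoints of the inclusion. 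Finally $\Db(Y_a)\otimes\Db(Z_b)\simeq\Db(Y_a\times Z_b)$ at the level of pretriangulated dg enhancements and $\dim(Y_a\times Z_b)\leq i+j$, so $\kc\otimes\kd$ is representable in dimension $i+j$. I expect this lemma to be the main obstacle: the compatibility of the derived tensor product of dg categories with products of varieties — together with the fact that tensoring a semiorthogonal decomposition by a fixed admissible subcategory again produces one with admissible pieces — is the one genuinely non-formal input. The product formula $I(X)\bullet I(Y)=I(X\times Y)$ is part of \cite{bondal-larsen-lunts}, and the remaining compatibilities are available in the dg-categorical literature (see also \cite{tabuada-book}), but they should be cited with care.

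Granting the lemma I would conclude as follows. Since $\bullet$ is biadditive and $PT_{i+j}(k)$ is a subgroup, it suffices to treat the case where $a=I(\ka)$ and $b=I(\ka')$ lie in the generating families of $PT_i(k)$ and $PT_j(k)$. Fix pretriangulated dg categories $\kb,\kb'$ with $\Rep(\kb)\leq i$, $\Rep(\kb')\leq j$, and auxiliary categories $\ka_1,\ldots,\ka_r$, $\ka'_1,\ldots,\ka'_s$ with $I(\kb)=I(\ka)+\sum_l I(\ka_l)$ and $I(\kb')=I(\ka')+\sum_m I(\ka'_m)$. Multiplying these identities and expanding by biadditivity gives
$$I(\kb\otimes\kb')=I(\kb)\bullet I(\kb')=I(\ka)\bullet I(\ka')+e,$$
where $e$ collects the terms $I(\ka)\bullet I(\ka'_m)=I(\ka\otimes\ka'_m)$, $I(\ka_l)\bullet I(\ka')=I(\ka_l\otimes\ka')$ and $I(\ka_l)\bullet I(\ka'_m)=I(\ka_l\otimes\ka'_m)$, so $e=\sum_n I(\ke_n)$ for suitable pretriangulated dg categories $\ke_n$. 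By the lemma $\Rep(\kb\otimes\kb')\leq i+j$, so the displayed identity exhibits $I(\ka)\bullet I(\ka')=I(\ka\otimes\ka')$ as a member of the generating family of $PT_{i+j}(k)$, with witness $\kb\otimes\kb'$ and auxiliary categories the $\ke_n$. Hence $a\bullet b=I(\ka\otimes\ka')\in PT_{i+j}(k)$, and biadditivity together with the subgroup property of $PT_{i+j}(k)$ extends this to arbitrary $a\in PT_i(k)$ and $b\in PT_j(k)$, completing the proof.
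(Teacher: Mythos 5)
The proposal is correct and follows essentially the paper's own route: both reduce the additive claim to the nested inclusions $PT_i(k)\subseteq PT_{i+1}(k)$ together with the fact that $PT_d(k)$ is by definition an additive subgroup (the paper makes this concrete with the direct-sum witness $\kb\oplus\kb'$), and both reduce the multiplicative claim to the admissibility of $\kb_a\otimes\kb'_b$ inside $\Db(Y_a\times Z_b)$ of dimension at most $i+j$, then use the ring structure of $PT(k)$ to propagate this to $I(\ka)\bullet I(\ka')$. The one non-formal input you correctly isolate — that tensoring admissible embeddings yields an admissible subcategory of the product, compatibly with the identification $\Db(Y_a)\otimes\Db(Z_b)\simeq\Db(Y_a\times Z_b)$ — is exactly what the paper dispatches by citing \cite{kuznetbasechange}, a reference you should add to close your lemma.
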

\begin{proof}
First of all, by definition, $PT_i(k) \subset PT_{i+1}(k)$ for any integer $i \geq 0$.

To prove that $a+b$ is in $PT_{\mathrm{max}(i,j)}(k)$, it is enough to assume, without loss of generality,
that $i \leq j$, and recall that $PT_j(k)$ is an additive group by definition. 

To prove that $a \bullet b \,\, \text{ is in } PT_{i+j}(k)$, it is enough to work on generators and consider the case where $a=I(\ka)$, and $b=I(\kb)$
for $\ka$ (resp. $\kb$) admissible subcategory of $\Db(X)$ (resp. $\Db(Y)$) and $X$ (resp. $Y$) smooth and projective of dimension at most $i$ (resp. $j$).
Then $\ka \otimes \kb$ is admissible in $\Db(X \times Y)$ (see, e.g., \cite{kuznetbasechange}), so that $\Repcat(\ka\otimes\kb) \leq i+j$ by an
easy dimension calculation.
\end{proof}

\begin{definition}
Let $\ka$ be a pretriangulated dg category. The \linedef{motivic categorical dimension} of $\ka$
is the smallest integer $d$ such that $I(\ka)$ belongs to $PT_d(k)$. We denote this value (which is either
a nonnegative number or infinity), by $\mathrm{mcd}(\ka)$. More explicitly, we have

$$
\begin{array}{rl}
\mathrm{mcd}(\ka) = \mathrm{min} &\{ d \,\, | \,\, I(\ka) \in PT_d(k) \}.
\end{array}$$

If $X$ is a smooth and projective scheme,
then we also set $\mathrm{mcd}(X):=\mathrm{mcd}(\Db(X))$. By convention, we set $\mathrm{mcd}(0)=-1$.

\end{definition}

\begin{lemma}\label{lemma:compare-mcd-catrep}
For any pretriangulated dg category $\ka$, we have:
$$\mathrm{mcd}(\ka) \leq \Rep(\ka).$$

For any smooth and projective variety $X$, we have:
$$\mathrm{mcd}(X) \leq \Rep(X) \leq \dim(X).$$
\end{lemma}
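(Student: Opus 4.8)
The plan is to observe that both inequalities are a direct unwinding of the definitions, so the work is entirely in choosing the right witnesses.

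First I would prove $\mathrm{mcd}(\ka)\le\Rep(\ka)$. If $\Rep(\ka)$ is not finite there is nothing to prove, so assume $\Rep(\ka)=m<\infty$. By Definition~\ref{def-rep-for-cat} the category $\ka$ admits a semiorthogonal decomposition $\ka=\sod{\ka_1,\ldots,\ka_r}$ with each $\ka_i$ equivalent to an admissible subcategory of some $\Db(Y_i)$, $\dim Y_i\le m$. Taking $\kb=\ka$ together with the empty family of extra summands in the definition of $PT_m(k)$ already yields $I(\ka)\in PT_m(k)$, whence $\mathrm{mcd}(\ka)\le m$. If one prefers not to rely on the empty family, I would instead argue that $I(\ka_i)\in PT_m(k)$ for each $i$: since $\ka_i$ is admissible in $\Db(Y_i)$ one has $I(Y_i)=I(\ka_i)+I(\ka_i^\perp)$ with $\Rep(\Db(Y_i))\le\dim Y_i\le m$, so $I(\ka_i)$ qualifies; then $I(\ka)=\sum_i I(\ka_i)$ lies in $PT_m(k)$ by the additivity clause of Proposition~\ref{prop:the-filtration}. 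Either way, $\mathrm{mcd}(\ka)\le\Rep(\ka)$.

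Next I would deduce the chain $\mathrm{mcd}(X)\le\Rep(X)\le\dim(X)$ for $X$ smooth and projective. The first inequality is the special case $\ka=\Db(X)$ of what has just been shown, using $\mathrm{mcd}(X)=\mathrm{mcd}(\Db(X))$ and $\Rep(X)=\Rep(\Db(X))$. For the second, it suffices to note that $\Db(X)$ is tautologically representable in dimension $\dim(X)$: one takes the trivial semiorthogonal decomposition $\Db(X)=\sod{\Db(X)}$ with $Y_1=X$, so $\Repcat(X)=\Rep(\Db(X))\le\dim X$.

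The honest assessment is that there is no genuine obstacle here; the statement is purely definitional. The only points that need a moment's care are the convention that $\Rep(\ka)=+\infty$ when $\ka$ is representable in no finite dimension, which makes the first inequality vacuous in that case, and, depending on how strictly one reads the definition of $PT_d(k)$, whether the auxiliary family $\ka_1,\ldots,\ka_r$ is permitted to be empty; if it is not, the detour through Proposition~\ref{prop:the-filtration} sketched above serves as the replacement.
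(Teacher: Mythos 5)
Your proposal is correct and follows essentially the same route as the paper: the first inequality is extracted directly from the definition of $PT_d(k)$ (with $\kb=\ka$ itself as witness), and the second follows from the trivial semiorthogonal decomposition $\Db(X)=\sod{\Db(X)}$. Your extra remarks about the infinite case and the empty auxiliary family are harmless elaborations of the same definitional argument.
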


\begin{proof}
The first inequality follows by definition of the subset $PT_d(k)$: if $\Rep(\ka) = d$, then clearly
$I(\ka)$ belongs to $PT_d(k)$, hence $\mathrm{mcd}(\ka) \leq d$. Now, if $X$ is smooth and projective,
the first part of the second inequality is nothing but the previous one with $\ka=\Db(X)$. The second part of the same inequality
is proved in Lemma \ref{lem:trivial-bound}.
\end{proof}

\begin{remark}
We notice that strict inequality $\mathrm{mcd}(\ka) < \Rep(\ka)$
can hold
over any field $k$, as the category $\kb$ from Example
\ref{ex:bond-kuz} shows. In particular, if we set $\dgcat_{\leq d}(k)$ to be the
full subcategory of $\dgcat(k)$ whose objects are the dg categories $\ka$ such that
$\Rep\ka \leq d$, then we have 
$$PT_{eff}(k) \cap PT_d(k) \neq I(\ZZ[\dgcat_{\leq d}(k)]),$$
even for $d=0$.
A natural question is then to find conditions for which the equality
$\mathrm{mcd}(\ka) = \Rep (\ka)$ holds. As we will see in Corollary \ref{cor:maximal-motivic-dimension},
there exist smooth and projective manifolds $X$ with $\mathrm{mcd}(X)=\Repcat(X)=\dim(X)$. For example this holds if
$X$ is Calabi-Yau (see Example \ref{ex:CY_mfds} as well).
\end{remark}

Consider now the Grothendieck ring $K_0(\mathrm{Var}(k))$ of $k$-varieties whose unit $1=[\Spec(k)]$ is the class
of the point, and recall that a \linedef{motivic measure} is a ring homomorphism
$\mu: K_0(\mathrm{Var}(k)) \to R$ to some ring $R$. 
Using weak factorization, the ring $K_0(\mathrm{Var}(k))$ can be seen as the $\ZZ$-module generated by isomorphism classes of smooth proper varieties,
where we set $[\emptyset]=0$ and with the relation $[X] - [Z] = [Y] - [E]$ whenever $Y \to X$ is the blow-up
along the smooth center $Z$ with exceptional divisor $E$, see \cite{bittner}.
The class of the affine line in $K_0(\mathrm{Var}(k))$ is denoted by $\LL$.

Bondal, Larsen, and Lunts \cite{bondal-larsen-lunts} show that, in this case, the assignment $[X] \mapsto I(\Db(X))$ for a smooth projective 
variety $X$ defines a motivic measure:
\begin{equation}\label{eq:def-of-mbll}
\mu: K_0(\mathrm{Var}(k)) \to PT(k).  
\end{equation}
We will denote $I(X):=\mu([X])=I(\Db(X))$.
Recall the semiorthogonal decomposition $\Db(\PP^1) = \sod{\ko, \ko(1)}$, where both components
are equivalent to $\Db(\Spec(k))$. Since $[\PP^1]=\LL + 1$ in $K_0(\mathrm{Var}(k))$, we deduce that
$\mu(\LL)=1$. More generally, $\mu([\PP^m])=m+1$ for any positive integer $m$.

\begin{remark}\label{rmk:noproblem}
Notice that, if $X$ is a smooth projective $k$-scheme, then $I(X) \neq 0$ in $PT(k)$.
This can be shown, for example, using that Hochschild homology (see below) is nontrivial.
Hence, if $X$ and $Y$ are smooth and projective $k$-varieties, and $m$, and $n$ nonnegative
integers, then $m I(X) + n I(Y) = 0$ if and only if $m=n=0$. Indeed, the former is the
class of the scheme $(\PP^{m-1} \times X) \amalg (\PP^{n-1} \times Y)$.
\end{remark}

\subsection{Hochschild homology}\label{sect:Hochhom}
We denote by $\mathsf{dgcat}(k)$ the category of small pretriangulated dg categories with morphisms given
by dg functors.
An additive invariant is a functor from the category
$\mathsf{dgcat}(k)$ to some additive category that send
semiorthogonal decompositions into direct sums. Many of such invariants are the noncommutative
interpretation of well-known cohomology theories. We present here one of them, Hochschild homology,
which can be thought of as a noncommutative interpretation of the (vertically graded) Hodge structure
on Betti cohomology. Hochschild homology will turn out to be a very useful tool in our proofs. However, there
are several more additive invariants that one can consider, see \cite[\S 2]{tabuada-book} for a detailed
account.


Let us just recall that for any integer $n$, we have functors
$$\begin{array}{rl}
HH_n :& \mathsf{dgcat}(k) \too \mathrm{Vect}(k),\\
\end{array}
$$
and notice that $HH_n$ and $HH_\bullet$ are additive invariants, see, e.g., \cite[\S 2.2.8]{tabuada-book}.
We will also consider the graded $k$-vector space
$$HH_\bullet(\ka) := \bigoplus_{i \in \ZZ} HH_i(\ka),$$
so that $HH_\bullet$ also gives rise to an additive invariant.
If $\ka = \Db(X)$, we will use the shorthands
$HH_i(X):=HH_i(\Db(X))$ and $HH_\bullet(X):=HH_\bullet(\Db(X))$.
We will also use the notation $hh_i(\ka)$ to denote $\mathrm{dim}HH_i(\ka)$, and note that
$hh_i$ descends to a linear function:
\begin{equation}\label{eq:defining-hhi}
hh_i : PT(k) \longrightarrow \ZZ.
\end{equation}

We hence obtain the following proposition either via noncommutative motives, as done by Tabuada \cite{tabuada-book},
or in an explicit geometric way, as done by Kuznetsov \cite{kuz:hochschild}.

\begin{proposition}\label{prop:deco-of-hochschild}
Let $X$ be a smooth and projective variety, and $\ka \subset \Db(X)$ an admissible subcategory.
If $\ka= \sod{\ka_1,\ldots,\ka_r}$ is a semiorthogonal decomposition of $\ka$, we have:
$$\begin{array}{lr}
HH_\bullet(\ka) \simeq \bigoplus_{i=1}^r HH_\bullet (\ka_i), & HH_n(\ka) \simeq \bigoplus_{i=1}^r HH_n (\ka_i)
  \end{array}$$
for every integer $n$.
\end{proposition}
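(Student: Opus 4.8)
The statement to prove is Proposition \ref{prop:deco-of-hochschild}, namely that Hochschild homology is additive along semiorthogonal decompositions. The plan is to invoke the general machinery of additive invariants rather than to reprove anything by hand. First I would recall that, as stated in the text just above the proposition, the functors $HH_\bullet$ and $HH_n$ are additive invariants on $\mathsf{dgCat_{geo}}(k)$, i.e. they send semiorthogonal decompositions to direct sums by construction (see \cite[\S 2.2.8]{tabuada-book}). Since an admissible subcategory $\ka \subset \Db(X)$ of the derived category of a smooth projective variety again lies in $\mathsf{dgCat_{geo}}(k)$, the functors apply to $\ka$ and to each $\ka_i$.

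The key observation is that a semiorthogonal decomposition $\ka = \sod{\ka_1, \ldots, \ka_r}$ can be resolved into a sequence of two-term decompositions: $\ka = \sod{\ka_1, \sod{\ka_2, \ldots, \ka_r}}$, and then one proceeds by induction on $r$. For the base case $r=2$, additivity of the invariant gives directly $HH_\bullet(\ka) \simeq HH_\bullet(\ka_1) \oplus HH_\bullet(\sod{\ka_2,\ldots,\ka_r})$, and one should check that the right-hand factor is itself an admissible subcategory (of $\ka$, hence of $\Db(X)$), so that the inductive hypothesis applies to it. Summing over the induction yields the direct sum decomposition $HH_\bullet(\ka) \simeq \bigoplus_{i=1}^r HH_\bullet(\ka_i)$. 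The statement for $HH_n$ then follows by taking $n$-th cohomology, since cohomology of a direct sum of complexes in $\Db(k)$ is the direct sum of the cohomologies.

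The only real point requiring care — and the main (mild) obstacle — is confirming that each piece appearing in the iterated decomposition remains in the category $\mathsf{dgCat_{geo}}(k)$ where the invariant is defined, i.e. that $\sod{\ka_2, \ldots, \ka_r}$ is an admissible subcategory of $\Db(X)$. This is standard: the orthogonal complement ${}^\perp\ka_1$ inside $\ka$ realizes $\sod{\ka_2,\ldots,\ka_r}$, and admissibility of $\ka_1$ in $\ka$ together with admissibility of $\ka$ in $\Db(X)$ gives admissibility of this complement in $\Db(X)$. With this in place the argument is purely formal. Alternatively, one can simply cite the explicit geometric treatment of Kuznetsov \cite{kuz:hochschild}, which establishes exactly this decomposition of Hochschild homology along semiorthogonal decompositions without passing through the noncommutative motives formalism; I would mention both routes and leave the details to the references.
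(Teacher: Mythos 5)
Your proposal is correct and takes essentially the same route as the paper: the paper states Proposition~\ref{prop:deco-of-hochschild} as an immediate consequence of $HH_\bullet$ and $HH_n$ being additive invariants, citing Tabuada's noncommutative-motives treatment and Kuznetsov's explicit geometric one, which is exactly what you do. Your extra observation — that to pass from a two-term definition of additivity to an $r$-term statement one iterates and must check that the intermediate pieces $\sod{\ka_i,\ldots,\ka_r}$ remain admissible in $\Db(X)$ — is a useful unpacking that the paper leaves implicit, but it is not a different approach.
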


Proposition \ref{prop:deco-of-hochschild}, together with the Hochschild-Konstant-Rosenberg isomorphisms \cite{HKR-original}
is a very useful tool to bound the motivic categorical dimension
of a smooth projective variety.

\begin{proposition}\label{prop:from-rep-to-hoch}
Let $\ka$ be a pretriangulated dg category.
If $\Rep(\ka) = m$, then $HH_i(\ka)=0$ for $|i| > m$.
\end{proposition}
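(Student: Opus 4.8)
The plan is to reduce the statement, by the additivity of Hochschild homology, to the elementary fact that a smooth projective variety has Hodge cohomology concentrated in the expected range. First I would unwind the hypothesis: by Definition \ref{def-rep-for-cat}, saying $\rep(\ka)=m$ means that $\ka$ admits a semiorthogonal decomposition $\ka=\sod{\ka_1,\ldots,\ka_r}$ in which each $\ka_i$ is equivalent to an admissible subcategory of $\Db(Y_i)$ for some smooth projective $k$-variety $Y_i$ with $\dim Y_i\le m$. Since Hochschild homology is an additive invariant, this decomposition gives $HH_n(\ka)\simeq\bigoplus_{i=1}^r HH_n(\ka_i)$ for every $n$ — this is Proposition \ref{prop:deco-of-hochschild} when $\ka$ is itself geometric, and holds for an arbitrary pretriangulated dg category by the general formalism of additive invariants recalled above. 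It therefore suffices to prove, for each fixed $i$, that $HH_n(\ka_i)=0$ whenever $|n|>m$.

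Next I would pass from $\ka_i$ to its ambient variety. Identifying $\ka_i$ with the corresponding admissible subcategory of $\Db(Y_i)$, there is a semiorthogonal decomposition $\Db(Y_i)=\sod{\ka_i^\perp,\ka_i}$, and applying Proposition \ref{prop:deco-of-hochschild} once more exhibits $HH_n(\ka_i)$ as a direct summand of $HH_n(Y_i)$. Hence it is enough to show $HH_n(Y_i)=0$ for $|n|>m$. For this I would invoke the Hochschild--Kostant--Rosenberg isomorphism, valid in characteristic zero, which identifies $HH_n(Y_i)$ with $\bigoplus_{q-p=n}H^p(Y_i,\Omega^q_{Y_i})$. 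Writing $d_i:=\dim Y_i\le m$, the sheaf cohomology group $H^p(Y_i,\Omega^q_{Y_i})$ vanishes unless $0\le p\le d_i$ and $0\le q\le d_i$, so every nonzero summand contributes an index $q-p\in[-d_i,d_i]$. Therefore $HH_n(Y_i)=0$ for $|n|>d_i$, a fortiori for $|n|>m$; combined with the first step, this gives the claim.

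I do not expect a genuine obstacle here; the only point deserving care is a bookkeeping one. The category $\ka$, or some component $\ka_i$, need not a priori lie in $\mathsf{dgCat_{geo}}(k)$, since it may be obtained by gluing geometric pieces; this is why in the first step I use additivity of $HH_\bullet$ on all pretriangulated dg categories rather than Proposition \ref{prop:deco-of-hochschild} verbatim. One can sidestep the issue entirely by applying that proposition only to the genuinely geometric categories $\Db(Y_i)$: first split off $\ka_i$ as an admissible summand, then refine by any semiorthogonal decomposition of $\ka_i$. Beyond this, the argument uses nothing more than additivity of Hochschild homology and the range of the Hodge diamond of a smooth projective variety.
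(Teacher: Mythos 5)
Your argument is correct and is essentially the paper's own proof: the paper likewise uses additivity of $HH_\bullet$ across the semiorthogonal decomposition witnessing $\rep(\ka)=m$ to reduce to a component admissible in some $\Db(Y_i)$ with $\dim Y_i\le m$, and then concludes from the vanishing $HH_n(Y_i)=0$ for $|n|>\dim Y_i$ (the Hochschild--Kostant--Rosenberg range, cf.\ \eqref{eq:hochtohodge}). You simply spell out the steps the paper leaves implicit, namely exhibiting $HH_n(\ka_i)$ as a summand of $HH_n(Y_i)$ via $\Db(Y_i)=\sod{\ka_i^\perp,\ka_i}$ and the careful use of additivity when $\ka$ itself need not be geometric, which is a reasonable precision but not a different method.
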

\begin{proof}
Let us first recall that, if $X$ is a smooth projective variety of dimension $m$, the
the Hochschild-Konstant-Rosenberg isomorphism gives $HH_i(X)=0$ for $|i|>m$.
Hence, for any $\kb$ admissible in $\Db(X)$, we apply Proposition \ref{prop:deco-of-hochschild}
to obtain that $HH_i(\kb)=0$ for $|i|>m$. 

Now if $\ka$ satisfies the assumptions, there is a semiorthogonal decomposition $\ka=\sod{\ka_1,
\ldots,\ka_r}$ and with $\ka_j$ admissible in $\Db(X_j)$ and $X_j$ smooth and projective of
dimension $\dim(X_j) \leq m$, for any $j=1,\ldots,r$. It follows that $HH_i(\ka_j)=0$
for $|i|>m$ and any $j=1,\ldots,r$. Finally, we conclude by using Proposition \ref{prop:deco-of-hochschild}
which gives $HH_i(\ka)=\oplus_{j=1}^r HH_i(\ka_j) = 0$ for $|i|>m$.
\end{proof}

If $k \subset \CC$ is an algebraically closed subfield, and $X$ a smooth
projective variety over $k$,
Weibel \cite{weibel-hoch-hodge} has shown that:
\begin{equation}\label{eq:hochtohodge}
HH_n(X) \simeq \bigoplus_{p-q=n} H^{p,q}(X),
\end{equation}
where $H^{p,q}(X)=H^q(X,\Omega_X^p)$. This gives another criterion to bound the motivic categorical
dimension of a smooth projective variety.

\begin{corollary}\label{cor:maximal-motivic-dimension}
If $X$ is a smooth projective variety over an algebraically closed field $k \subset \CC$, set
$$m := \mathrm{max}\{ i \, \vert \text{ there exist } p,q \text{ such that } p-q=i \text{ and } h^{p,q}(X) \neq 0 \}.$$
Then
$$\Repcat(X) \geq \mathrm{mcd}(X) \geq m.$$
In particular, if $X$ has dimension $n$ and $h^{n,0}(X) \neq 0$, then $\Repcat(X)=\mathrm{mcd}(X)=n$.
\end{corollary}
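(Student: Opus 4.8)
The plan is to string together the two criteria established just before: the Hodge-to-Hochschild comparison \eqref{eq:hochtohodge} and the bound on Hochschild homology coming from categorical representability (Proposition \ref{prop:from-rep-to-hoch}, together with the comparison $\mathrm{mcd}(X) \leq \Repcat(X)$ of Lemma \ref{lemma:compare-mcd-catrep}). First I would fix $p,q$ realizing the maximum, so that $h^{p,q}(X) \neq 0$ with $p-q = m$. By Weibel's formula \eqref{eq:hochtohodge}, the group $HH_m(X) = \bigoplus_{p'-q'=m} H^{p',q'}(X)$ is nonzero, since it contains the nonzero summand $H^{p,q}(X)$. (There is no cancellation to worry about: \eqref{eq:hochtohodge} is a direct sum decomposition, so a single nonzero summand forces $HH_m(X) \neq 0$.)

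Next I would argue by contradiction for the inequality $\mathrm{mcd}(X) \geq m$. Suppose $\mathrm{mcd}(X) = d < m$. Unwinding the definition of $\mathrm{mcd}$, this means $I(\Db(X))$ lies in $PT_d(k)$, so there are pretriangulated dg categories $\ka_1,\ldots,\ka_r$ and $\kb$ with $\Rep(\kb) \leq d$ and $I(\kb) = I(\Db(X)) + \sum_i I(\ka_i)$. I would like to conclude that $HH_m$ vanishes on $\Db(X)$. The honest way to do this is to pass through the additive invariant $HH_m$: since $HH_m$ is an additive invariant (it sends semiorthogonal decompositions to direct sums, by Proposition \ref{prop:deco-of-hochschild}), it factors through the Grothendieck ring, inducing a group homomorphism $PT(k) \to K_0(\mathrm{Vect}(k)) \cong \ZZ$, $I(\ka) \mapsto \dim_k HH_m(\ka)$ — at least on the subring $PT_{\mathrm{geo}}$ of classes coming from $\mathsf{dgCat_{geo}}(k)$, where $HH_m$ is defined. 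Applying this homomorphism to $I(\kb) = I(\Db(X)) + \sum_i I(\ka_i)$ gives $\dim_k HH_m(\kb) = \dim_k HH_m(\Db(X)) + \sum_i \dim_k HH_m(\ka_i) \geq \dim_k HH_m(\Db(X)) > 0$, using that dimensions of the summands are nonnegative and $HH_m(\Db(X)) \neq 0$. On the other hand, $\Rep(\kb) \leq d < m$ forces $HH_m(\kb) = 0$ by Proposition \ref{prop:from-rep-to-hoch}, a contradiction. Hence $\mathrm{mcd}(X) \geq m$, and combined with Lemma \ref{lemma:compare-mcd-catrep} this yields $\Repcat(X) \geq \mathrm{mcd}(X) \geq m$.

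For the final assertion, if $\dim(X) = n$ and $h^{n,0}(X) \neq 0$, then $p = n$, $q = 0$ gives a nonzero Hodge number with $p - q = n$, so $m \geq n$; but $\Repcat(X) \leq n$ always by Lemma \ref{lemma:compare-mcd-catrep}, so all the inequalities $n \leq m \leq \mathrm{mcd}(X) \leq \Repcat(X) \leq n$ collapse to equalities, giving $\Repcat(X) = \mathrm{mcd}(X) = n$.

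The only delicate point I anticipate is the passage from the defining relation $I(\kb) = I(\Db(X)) + \sum_i I(\ka_i)$ in $PT(k)$ to the numerical inequality on $\dim_k HH_m$. One must be slightly careful that $HH_m$ is only defined on $\mathsf{dgCat_{geo}}(k)$, not on all pretriangulated dg categories; but this is harmless here, because by definition every class appearing in a $PT_d(k)$-membership certificate is (a summand of a class) realized inside a derived category of a smooth projective variety, hence lies in the geometric subcategory, so $HH_m$ is defined on all the pieces. Given that, the additivity of $HH_m$ over semiorthogonal decompositions (Proposition \ref{prop:deco-of-hochschild}) is exactly what makes $\dim_k HH_m$ descend to a well-defined additive function on the relevant part of $PT(k)$, and the rest is bookkeeping.
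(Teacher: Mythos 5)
Your proof is correct and follows the paper's own approach: the paper's proof is terse (it just invokes Proposition \ref{prop:from-rep-to-hoch} and \eqref{eq:hochtohodge}), and you correctly supply the implicit step that additivity of $HH_m$ across the relations in $PT(k)$ is what transfers the Hochschild vanishing from a bounding category $\kb$ with $\Rep(\kb)\le d$ to $\Db(X)$ itself. A minor imprecision: $I(\Db(X))\in PT_d(k)$ means, by the definition, that $I(\Db(X))$ is a $\ZZ$-linear combination of classes each of which is a summand of some $I(\kb^{(j)})$ with $\Rep(\kb^{(j)})\le d$, not that $I(\Db(X))$ itself is such a summand; applying $\dim HH_m$ term by term nevertheless kills every contribution, so the conclusion stands.
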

\begin{proof}
The first inequality has already been proved in Lemma \ref{lem:trivial-bound}.
To obtain the second inequality, it is enough to apply Proposition \ref{prop:from-rep-to-hoch} to the category $\Db(X)$ and use the formula
\eqref{eq:hochtohodge} to see that we cannot have $\mathrm{mcd}(X) < m$. The last chain of equalities easily follows from $\Repcat(X) \leq n$.
\end{proof}

\begin{example}\label{ex:CY_mfds}
Let $X \subset \PP^{n+1}$ be a smooth projective hypersurface of degree $n+2$ over an algebraically closed subfield $k \subset \CC$.
Then $\mathrm{mcd}(X)=n$. Indeed, we have that $H^{n,0}(X)$ is one-dimensional.
\end{example}

The above criteria and example will provide a very useful tool in the proof of Theorem \ref{thm:introfiltraisgood}.
However, the converse to the statement in Proposition \ref{prop:from-rep-to-hoch} is not true as soon as $\Rep(\ka) \geq 2$.
The following result shows that categorical representability captures indeed much finer invariants than Hochschild homology.

\begin{proposition}\label{prop:cat2surfaces}
Let $S$ be a smooth complex projective surface with $h^{1,0}(S)=h^{2,0}(S)=0$, and such that $K_0(S)$ has nontrivial torsion.
Then $HH_i(S)=0$ for $i \neq 0$, and $\Repcat(S)=2$.
\end{proposition}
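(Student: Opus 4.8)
The plan is to prove the two assertions of Proposition \ref{prop:cat2surfaces} separately, the first being an immediate application of the Hochschild--Hodge dictionary and the second requiring us to rule out categorical representability in dimension $1$ and $0$.

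First, the vanishing of $HH_i(S)$ for $i \neq 0$. Since $S$ is a smooth complex projective surface, formula \eqref{eq:hochtohodge} gives $HH_n(S) \simeq \bigoplus_{p-q=n} H^{p,q}(S)$. The relevant Hodge numbers are $h^{0,0}=h^{2,2}=1$, $h^{1,1}$, and $h^{2,0}=h^{0,2}=p_g=0$, $h^{1,0}=h^{0,1}=q=0$. Hence the only nonzero contributions land in $n=0$, where $HH_0(S) \simeq H^{0,0}\oplus H^{1,1}\oplus H^{2,2}$, and $HH_i(S)=0$ for $i \neq 0$. (Equivalently one can invoke the HKR-type computation directly.) In particular Proposition \ref{prop:from-rep-to-hoch} only tells us $\Repcat(S) \geq 0$, so the Hochschild invariant alone is useless here, which is precisely the point of the proposition.

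Second, that $\Repcat(S)=2$. We always have $\Repcat(S) \leq \dim(S) = 2$ by Lemma \ref{lemma:compare-mcd-catrep}, so it suffices to show $\Repcat(S) \neq 0$ and $\Repcat(S) \neq 1$, i.e. that $\Db(S)$ admits no semiorthogonal decomposition all of whose components embed in derived categories of curves or points. The key external input is the behaviour of the Grothendieck group (equivalently the topological $K$-theory or, at the level of Chow groups, $\mathrm{CH}_0$) under semiorthogonal decompositions: an additive invariant such as $K_0(-)$ (or $K_0(-)\otimes$ nothing, keeping torsion) splits as a direct sum along a semiorthogonal decomposition, $K_0(\ka) \simeq \bigoplus_i K_0(\ka_i)$. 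If $\ka_i$ is an admissible subcategory of $\Db(C_i)$ for a smooth projective curve $C_i$ (the case $m \leq 1$; points are the degenerate case), then $K_0(\ka_i)$ is a direct summand of $K_0(C_i) \simeq \ZZ^2 \oplus \Pic^0(C_i)$, which is a finitely generated abelian group whose torsion is that of a compact real torus, hence \emph{divisible-free} in a controlled way — more to the point, for a curve over an algebraically closed field $K_0(C)$ has \emph{no torsion} at all (it is $\ZZ \oplus \mathrm{Pic}(C)$ and $\mathrm{Pic}(C)$ of a smooth projective curve over an algebraically closed field is torsion-free: $\ZZ \oplus J(C)(\CC)$ with $J(C)(\CC)$ a divisible group — wait, divisible groups do have torsion). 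The cleaner route: use that for a smooth projective curve $C$ the group $K_0(C) \simeq \ZZ \oplus \mathrm{Pic}(C)$, and a direct summand of it, being a subgroup, has torsion subgroup contained in $\mathrm{Pic}^0(C)_{\mathrm{tors}}$; but a direct \emph{summand} $N$ of $\ZZ \oplus \mathrm{Pic}(C)$ satisfies $\mathrm{Pic}(C) \simeq N \oplus N'$ after killing the $\ZZ$, and one shows the torsion cannot be "created". I would instead invoke the sharper statement, due to the interplay with Chow motives: a semiorthogonal component of $\Db(C)$ for a curve is itself of the form $\Db(k)^{\oplus a} \oplus (\text{subcategory of }\Db(C'))$ and ultimately $K_0$ of any admissible subcategory of a derived category of curves is \emph{torsion-free}, because $\mathrm{CH}_0$ of a curve is torsion-free (Roitman) and the Chow-motivic decomposition induced by an s.o.d. of a surface/curve respects this. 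Since $K_0(S)$ has nontrivial torsion by hypothesis, $K_0(S)$ cannot be a finite direct sum of torsion-free groups, so no such decomposition exists: $\Repcat(S) \geq 2$, hence $=2$.

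The main obstacle is the second part: making rigorous the claim that $K_0$ of an admissible subcategory of $\Db(\text{curve})$ is torsion-free, and that the splitting $K_0(S) \simeq \bigoplus_i K_0(\ka_i)$ respects torsion. The cleanest way I would carry this out is motivically: an s.o.d. of $\Db(S)$ whose pieces lie in derived categories of curves induces, via the theory of noncommutative motives (or directly via Chow motives as in Orlov's work on representability and the results quoted for intermediate Jacobians), a decomposition of the Chow motive of $S$ into summands of motives of curves and points; such a motive has torsion-free $\mathrm{CH}_0$ and torsion-free $K_0$, because curves and points do (Roitman's theorem for $\mathrm{CH}_0$, and $K_0$ of a curve over $\CC$ is $\ZZ \oplus \mathrm{Pic}$ with the relevant comparison). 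This contradicts the nontrivial torsion in $K_0(S)$. One should be a little careful that "admissible subcategory of $\Db(C)$" for a curve $C$ — by Okawa's theorem $\Db(C)$ is indecomposable for positive genus and equals $\sod{\Db(k),\Db(k)}$ only for $C=\PP^1$ — so in fact the only nontrivial admissible subcategories occurring are derived categories of points, which makes the torsion-freeness transparent; I would flag this simplification and use it to close the argument cleanly.
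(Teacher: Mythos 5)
Your Hochschild computation and the exclusion of $\Repcat(S)=0$ match the paper's: the Hodge-theoretic vanishing gives $HH_i(S)=0$ for $i\neq 0$, and a full exceptional collection would force $K_0(S)$ to be free of finite rank, contradicting the torsion hypothesis. So far so good.

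The gap is in ruling out $\Repcat(S)=1$, and you yourself flag it mid-argument and then do not actually close it. Your strategy relies on the claim that $K_0$ of an admissible subcategory of $\Db(C)$ for a smooth projective curve $C$ is torsion-free. This is false for positive genus. Over $\CC$ one has $K_0(C)\simeq \ZZ\oplus\Pic(C)\simeq \ZZ^2\oplus \Pic^0(C)$, and $\Pic^0(C)\simeq J(C)(\CC)$ is a divisible group whose torsion subgroup is $(\QQ/\ZZ)^{2g(C)}$. Roitman's theorem does not say that $\mathrm{CH}_0$ of a curve is torsion-free; it identifies the torsion of $\mathrm{CH}_0$ with the torsion of the Albanese, which for a curve is exactly this large divisible torsion. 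The appeal to Okawa's indecomposability theorem also does not salvage this: Okawa says $\Db(C)$ for $g(C)>0$ has no nontrivial semiorthogonal decomposition, so in a putative decomposition of $\Db(S)$ witnessing $\Repcat(S)\leq 1$ the relevant components would be $\Db(C_i)$ \emph{themselves}, not just ``derived categories of points,'' and $K_0(\Db(C_i))$ has torsion. So the torsion argument does not produce a contradiction in the $\Repcat(S)=1$ case as you have set it up. (One could try to compare divisible torsion in $K_0(C)$ with finite torsion in $K_0(S)$, but that requires knowing $K_0(S)_{\mathrm{tors}}$ is finite, which secretly invokes Bloch's conjecture for $S$; the paper avoids this entirely.)

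The paper closes the $\Repcat(S)=1$ case by a different route: if there were a fully faithful embedding $\Db(C)\hookrightarrow\Db(S)$ with $g(C)>0$, then by additivity of Hochschild homology one would get $0\neq H^{1,0}(C)\subset HH_1(C)\subset HH_1(S)$, contradicting $HH_1(S)=0$ from the $q=0$ assumption. In short: the paper uses the $K_0$-torsion obstruction only to exclude $\Repcat(S)=0$, and uses Hochschild homology (via $q=0$) to exclude $\Repcat(S)=1$; you tried to use $K_0$-torsion for both, and the second use does not work.
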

\begin{proof}
The proof can be retraced along \cite[\S 6]{auel-berna-frg}, but let us quickly recall it.
First, $HH_i(S)=0$ since $h^{p,q}(S)=0$ as soon as $p \neq q$, by assumption.

On the other hand,
$\Repcat(S) \leq 2$ by Lemma \ref{lem:trivial-bound}. 
Now, for any smooth projective complex variety $X$, the vanishing $\Repcat(X)=0$ is equivalent
to the existence of a full exceptional collection (see Lemma \ref{lem:dim0dim1}), and this implies that $K_0(X)$ is
free of finite rank. Then $\Repcat(S) \geq 1$.
If $\Repcat(S)=1$, then there is a curve $C$ of positive genus and a fully faithful functor
$\Db(C) \to \Db(S)$, see Lemma \ref{lem:dim0dim1}.
But this would give $0 \neq H^{1,0}(C) \subset HH_1(C) \subset HH_1(S)$, which contradicts
the assumption $h^{1,0}(S)=0$, so that $\Repcat(S)=2$.
\end{proof}

\begin{example}\label{ex:enriques}
There are examples of surfaces, in any nonnegative Kodaira dimension, satisfying the assumptions of Proposition \ref{prop:cat2surfaces},
as for example Enriques surfaces and classical Godeaux surfaces. For a more exhaustive list, see \cite[Ex. 6.1.14]{auel-berna-frg}.
\end{example}

\section{Categorical dimension of birational maps}

\subsection{Definition and main properties}
Let $X$ and $Y$ be smooth projective varieties.
Given a birational map $\phi: X \dashrightarrow Y$, a weak factorization $(b_1,c_1, \ldots, b_r,c_r)$ of $\phi$
is a diagram of the  form:
$$\xymatrix{
& Y_1 \ar[dl]_{b_1} \ar[dr]^{c_1} & & Y_2 \ar[dl]_{b_2} \ar[dr]^{c_2} & & & & Y_r \ar[dl]_{b_r} \ar[dr]^{c_r} &\\
X_0=X & & X_1 & & & \ldots & & & X_r=Y,
}$$
where $b_i$ and $c_i$ are either a blow-up of a smooth subvariety or an isomorphism, and $X_i$ and
$Y_i$ are smooth and projective. We also denote by $B_i \subset X_{i-1}$ the locus blown-up by $b_i$ and by
$C_i \subset X_i$ the locus blown-up by $c_i$. Recall that we assumed $k$ to have characteristic zero, so that
any birational map has a weak factorization \cite{abrmovichandco-weak}.

\begin{definition}\label{def:cat-dim-of-birmap}
Let $\phi: X \dashrightarrow Y$ be a birational map.
The motivic categorical dimension of a weak factorization $(b_1,c_1,\ldots,b_r,c_r)$ of $\phi$
is the integer 
$$\mathrm{mcd}(b_1,c_1,\ldots,b_r,c_r):= \mathrm{max}\{\mathrm{mcd}(C_i) \vert i=1,\ldots,r\}.$$

The \linedef{motivic categorical dimension} of $\phi$ is the integer
$$\mathrm{mcd} (\phi) := \mathrm{min}\{ \mathrm{mcd}(b_1,c_1,\ldots,b_r,c_r)  \, | (b_1,c_1,\ldots,b_r,c_r) \text{ is a weak factorization of } \phi \}.$$
\end{definition}

\begin{remark}\label{rmk:blow-ups}
Note that, by our convention $\mathrm{mcd}(0)=-1$, we have that $\mathrm{mcd}(\phi)=-1$ if and only if
$\phi^{-1}$ is a finite sequence of smooth blow-ups and isomorphisms.
\end{remark}

\begin{example}
Let $\sigma: X \to Y$ be the blow-up of a smooth subscheme $Z$ of $Y$ such that $\mathrm{mcd}(Z)=d$.
Then $\mathrm{mcd}(\sigma) \leq d$ and $\mathrm{mcd}(\sigma^{-1})=-1$. The second formula is obvious. To prove the second, 
consider the diagram:
$$\xymatrix{
& X \ar[dr]^\sigma \ar[dl]_{\id} \\
X \ar[rr]^\sigma & & Y,
}$$
which is a weak factorization of $\sigma$.
\end{example}

Using Bittner's presentation of the Grothendieck group $K_0(\mathrm{Var}(k))$ and Bondal-Larsen-Lunts
motivic measure, we can prove our main result.

\begin{definition}\label{def:subgrops-of-bir}
Let $X$ be a smooth projective $k$-variety, and $\mathrm{Bir}(X)$ the group of birational transformations
of $X$. We define, for any $d \geq -1$, the following subset:
$$\mathrm{Bir}_d(X) : = \{ \phi \,\, \vert \,\, \mathrm{mcd}(\phi) \leq d \} \subset \mathrm{Bir}(X).$$
\end{definition}

\begin{theorem}\label{thm:main}
The set $\mathrm{Bir}_d(X)$ is a subgroup of $\mathrm{Bir}(X)$, which coincides with the whole
group if $d \geq \dim(X)-2$, and with $\Aut(X)$ if $d=-1$.

More generally, if $\phi: X \dashrightarrow Y$ is a birational map and $I(X)=I(Y)$ in $PT(k)$, then $\mathrm{mcd}(\phi^{-1})=\mathrm{mcd}(\phi)$.
\end{theorem}

\begin{proof}
First of all, if $\phi, \psi$ are both in $\mathrm{Bir}_d(X)$, then their
compositions $\phi \circ \psi$ and $\psi \circ \phi$ also are in $\mathrm{Bir}_d(X)$. Indeed,
a weak factorization of the composition is easily written from the weak factorizations
of $\phi$ and $\psi$.

We prove now that if $\phi$ is in $\mathrm{Bir}_d(X)$, then also $\phi^{-1}$
is. We can more generally show the second statement.
So let $X$ and $Y$ be such that $I(X)=I(Y)$ and $\phi: X \dashrightarrow Y$ have motivic categorical dimension $d$. 
By definition, there is a weak factorization $(b_1,c_1,\ldots,
b_r,c_r)$ of $\phi$ with centers $C_i$ such that $I(C_i)$ is in $PT_d(k)$ for all $i$.
On the other hand, $\phi^{-1}$ clearly admits a weak factorization $(c_r,b_r,\ldots,c_1,b_1)$. 
Let us denote by $\beta_i$ (resp. $\gamma_i$)
the codimensions of $B_i$ (resp. $C_i$) in their ambient variety. Using Bittner's presentation
of $K_0(\mathrm{Var}(k))$, we obtain that 
\begin{equation}\label{eq:equality-in-K0}
[X] + \LL \sum_{i=1}^r  [B_i][\PP^{\beta_i-2}] = [Y] + \LL \sum_{i=1}^r [C_i][\PP^{\gamma_i-2}],
\end{equation}
in the Grothendieck group $K_0(\mathrm{Var}(k))$ (see, e.g., \cite{lamy-sebag}).  
Now we apply the motivic measure $\mu$ to the formula \eqref{eq:equality-in-K0}. Using the
fact that $\mu(\LL)=1$ and $\mu(\PP^m)=m+1$, we obtain the following
formula in the ring $PT(k)$:
\begin{equation}\label{eq:equality-in-PT}
I(X) + \sum_{i=1}^r (\beta_i-1)I(B_i) = I(Y) + \sum_{i=1}^r  (\gamma_i-1)I(C_i).
\end{equation}
We can cancel out $I(X)$ and $I(Y)$ in \eqref{eq:equality-in-PT} by our assumption.
This gives 
\begin{equation}\label{eq:withoutI}
\sum_{i=1}^r (\beta_i-1)I(B_i) = \sum_{i=1}^r (\gamma_i-1)I(C_i).
\end{equation}
The right hand side of \eqref{eq:withoutI} belongs to $PT_d(k)$ by assumption, so does the left hand side.
Notice that all the coefficients in both sides of \eqref{eq:withoutI} are strictly positive. It follows by 
definition of $PT_d(k)$, all of the $I(B_i)$ also belong to $PT_d(k)$, so that $\phi^{-1}$ also has
motivic categorical dimension at most $d$, and the equality follows by symmetry.

Finally, any $\phi$ in $\mathrm{Bir}(X)$ has motivic categorical dimension
bounded above by $\dim(X)-2$ since $\dim (C_i) \leq \dim(X)-2$ for any $i=1,\ldots,r$ (see Lemma \ref{lem:trivial-bound}).
On the other hand, if $\phi$ belongs to $\Bir_{-1}(X)$, so does $\phi^{-1}$. It follows from Remark \ref{rmk:blow-ups} that
this is the case if and only if $\phi$ is an automorphism.
\end{proof}

\begin{remark}
Notice that the proof of Theorem \ref{thm:main} works, more generally, whenever one has a semigroup $M \subset PT(k)$
of effective elements such that, for every $a$ in $M$, all the effective summands of $a$ belong to $M$.
In such a case, one can define $\mathrm{Bir}_M(X)$ to be the set of birational maps
admitting a weak factorization $(b_1,c_1,\ldots,b_r,c_r)$ such that all the $I(C_i)$ belong
to $M$. Then one has that $\mathrm{Bir}_M(X)$ is a subgroup of $\mathrm{Bir}(X)$. However, we stick here to
the definition of motivic categorical dimension, since semiorthogonal decompositions give all the information on
any additive invariant, and, hence, on most of the interesting
information about $X$ that one can retrieve from $I(X)$.
\end{remark}

\subsection{Cremona transformations of given motivic categorical dimension}

Here we consider $k \subset \CC$ to be an algebraically closed field.
The results treated in the above sections allow us to explain how to construct
birational maps with given motivic categorical dimension. This enables us to prove Theorem \ref{thm:introfiltraisgood}.

\begin{theorem}\label{prop:arealfiltration}
We have $\Bir_d(\PP^n) \neq \Bir_{d+1}(\PP^n)$ for all $-1 \leq d < n-2$. 
\end{theorem}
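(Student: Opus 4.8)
The plan is to exhibit, for each $d < n-2$, an explicit birational automorphism of $\PP^n$ whose categorical dimension is exactly $d+1$, thereby witnessing $\Bir_d(\PP^n) \subsetneq \Bir_{d+1}(\PP^n)$. The strategy has two parts: a construction producing a candidate map of categorical dimension $\leq d+1$, and a lower bound showing the categorical dimension is $> d$.

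For the construction, I would start from a smooth projective variety $Z$ of dimension $d+1$ which is rational (so that it can sit inside $\PP^n$ after blowing up) but which has $\mathrm{mcd}(Z) = d+1$; by Corollary \ref{cor:maximal-motivic-dimension} and Example \ref{ex:CY_mfds}, a smooth hypersurface of degree $d+3$ in $\PP^{d+2}$ has this property (its Hodge number $h^{d+1,0}$ is nonzero), and such a hypersurface is rational for small degree, or in any case can be embedded in $\PP^n$ for $n$ large; more generally one wants $Z \subset \PP^n$ a smooth subvariety of dimension $d+1$ with $h^{d+1,0}(Z)\neq 0$, which exists since $d+1 \leq n-2$. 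Blowing up $\PP^n$ along $Z$ gives $\pi: \tX \to \PP^n$ with exceptional divisor $E$. One then looks for a second blow-down structure $\tX \to \PP^n$ (or at least to a variety birational to $\PP^n$ which is again $\PP^n$): this is the delicate geometric input — one needs a flop or a sequence of elementary birational modifications turning $\tX$ into another copy of $\PP^n$ whose contracted centers all have motivic categorical dimension $\leq d+1$, so that composing produces $\phi \in \Bir(\PP^n)$ with a weak factorization of categorical dimension $\leq d+1$. Concretely, this is the kind of "Cremona-type" construction where one blows up a variety and blows down along a different projective bundle structure; the referee-acknowledged gap in the earlier version presumably concerned exactly this point, so care is needed.

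For the lower bound, suppose for contradiction that $c\dim(\phi) \leq d$. Then $\phi$ has a weak factorization with all centers $C_{i,j}$ satisfying $I(C_{i,j}) \in PT_d(k)$, and by the proof of Theorem \ref{thm:main} (equation \eqref{eq:withoutI}) all the $I(B_{i,j})$ lie in $PT_d(k)$ as well, with all coefficients strictly positive. Since $Z$ appears as one of the blown-up centers $B_{i,j}$ (it is the center of $\pi$, which is part of the factorization of $\phi$), this forces $I(Z) \in PT_d(k)$, i.e. $\mathrm{mcd}(Z) \leq d$, contradicting $\mathrm{mcd}(Z) = d+1$. Here one uses Remark \ref{rmk:noproblem} to ensure no cancellation can occur among the nonzero classes with positive coefficients, exactly as in the proof of Theorem \ref{thm:main}. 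This gives $c\dim(\phi) \geq d+1$, and combined with the upper bound, $c\dim(\phi) = d+1$, so $\phi \in \Bir_{d+1}(\PP^n) \setminus \Bir_d(\PP^n)$.

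The main obstacle is the geometric construction in the first part: producing, for every $d$ in the required range, an honest element of $\Bir(\PP^n)$ whose weak factorization contracts a variety $Z$ with $h^{d+1,0}(Z)\neq 0$ while keeping \emph{all other} contracted centers of motivic categorical dimension $\leq d+1$. One natural route is to take $Z$ of dimension $d+1$ embedded in a hyperplane (or a linear subspace) $\PP^{d+2} \subset \PP^n$ and to use a fibered Cremona construction relative to the residual $\PP^{n-d-3}$-worth of linear coordinates, reducing to the known case of a single Cremona-like involution on $\PP^{d+3}$ contracting a degree-$(d+3)$ hypersurface; one then checks the auxiliary centers appearing (proper transforms, exceptional loci of the ambient fibration) are rational or at least have small $\mathrm{mcd}$. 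Verifying the Hodge-theoretic non-vanishing is routine given Example \ref{ex:CY_mfds}, so the genuine work is the birational bookkeeping, which is where the hypothesis $k \subset \CC$ algebraically closed and the resolution machinery over characteristic zero are used.
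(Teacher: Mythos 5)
Your lower bound argument has a genuine gap. You suppose $c\dim(\phi)\leq d$, take the minimizing weak factorization whose $I(C_{i,j})$ all lie in $PT_d(k)$, and use the proof of Theorem \ref{thm:main} to place all the $I(B_{i,j})$ in $PT_d(k)$ as well — that part is fine. But you then assert that $Z$ appears among the $B_{i,j}$ because ``it is the center of $\pi$, which is part of the factorization of $\phi$.'' That is false as stated: the categorical dimension is a minimum over \emph{all} weak factorizations of $\phi$, and the minimizing one need not be the factorization you built. A priori there might exist some clever alternative weak factorization of $\phi$ that avoids ever blowing up anything birational to $Z$, and your argument gives no way to rule this out. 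The paper handles precisely this subtlety: it observes that the constructed map $\phi$ \emph{contracts} the Calabi--Yau hypersurface $X$, so in \emph{any} regular resolution (hence any weak factorization) there is an exceptional divisor on the $c$-side that is birational to $X\times\PP^1$; this divisor is birational to $C_{i,j}\times\PP^l$ for some center $C_{i,j}$, and the Liu--Sebag theorem (using $\kappa(X)=0$) forces $l=1$ and $C_{i,j}$ birational to $X$. Birational invariance of $h^{n-2,0}$ then gives $\mathrm{mcd}(C_{i,j})=n-2$. The crucial mechanism you are missing is stable birationality of contracted divisors, not membership of $Z$ in a particular weak factorization.

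On the construction side, you correctly anticipate the shape of the argument — a Cremona-type involution contracting a Calabi--Yau hypersurface, fibered/conjugated down from $\PP^{d+3}$ to $\PP^n$ via a projection-type map with only linear (hence $\mathrm{mcd}=0$) auxiliary centers — but you explicitly leave the construction unsupplied as ``the delicate geometric input.'' The paper produces a fully explicit involution $\phi_{n-2}$ on $\PP^n$ given in affine coordinates by $(x_1,\ldots,x_n)\mapsto(x_1 g/f,\ldots,x_n g/f)$ for coprime homogeneous $f,g$ of degrees $n$ and $n-2$, and for lower $d$ conjugates $(\phi_{d+1},\id)$ by the standard birational map $\PP^n\dashrightarrow\PP^{d+3}\times\PP^{n-d-3}$. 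Supplying this, and replacing your $PT_d(k)$-membership argument with the contraction/stable-birationality argument, is what would make the proof go through.
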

\begin{proof}
First of all, we prove that we $\Bir_{n-3}(\PP^n) \neq \Bir_{n-2}(\PP^n)$. To this
end, it is enough to construct a birational map of $\PP^n$ of motivic categorical dimension
exactly $n-2$. Let us consider homogeneous coordinates $[x_0: \ldots: x_n]$, and
a homogeneous polynomial $f$ of degree $n$ in the variables $(x_1, \ldots, x_n)$
defining a Calabi--Yau smooth hypersurface $X$  in the projective hyperplane
$H:=\{x_0=0\}$. As remarked in Example \ref{ex:CY_mfds}, we have $\mathrm{mcd}(X)=\dim(X)=n-2$,
since $h^{n-2,0}(X) \neq 0$.
Fix any homogeneous polynomial $g$ of degree $n-2$ in the variables $(x_1,\ldots,x_n)$
such that $\mathrm{gcd}(f,g)=1$, and define a birational involution $\phi_{n-2}$ of $\PP^n$ on the open subset
$x_0 \neq 0$ by the affine formula
$(x_1, \ldots, x_n) \mapsto (x_1 \frac{g}{f},\ldots, x_n \frac{g}{f})$. The fact that this gives a (birational) involution
of $\PP^n$ can be checked on the affine space $x_0 \neq 0$: iterating the formula twice, and using the fact that
$\deg(g) - \deg(f)=-2$, we have that $(\phi_{n-2})^2$ writes:
$$(x_1, \ldots, x_n) \mapsto (x_1 \frac{g}{f} \cdot \frac{g}{f} \cdot \left(\frac{g}{f}\right)^{-2} ,\ldots, x_n \frac{g}{f} \cdot \frac{g}{f} \cdot \left(\frac{g}{f}\right)^{-2})
= (x_1,\ldots,x_n),$$
that is, $(\phi_{n-2})^2$ equals the identity on the open affine subset $x_0 \neq 0$ of $\PP^n$.

In order to show that $X=\{f=x_0=0\}$ lies in the base-locus of $\phi_{n-2}$, we just have to write $\phi_{n-2}$ in homogeneous
coordinates:

$$\phi_{n-2}[x_0:\ldots:x_n] = [x_0 x_1 g : \ldots : x_0 x_n g : f].$$

Let now $(b_1,\ldots,c_r)$ be a weak factorization of $\phi_{n-2}$. In particular (see \cite[Thm. 0.1.1]{abrmovichandco-weak})
there is a $1 \leq j \leq r$ such that both $\sigma:=b_j \circ c_{j-1} \circ \ldots \circ b_1: Y_j \to \PP^n$ and
$\tau:=c_j \circ c_{j+1} \circ \ldots \circ c_r: Y_j \to \PP^n$ are projective morphisms. Since $X$ is irreducible
and has dimension $n-2$, by Zariski's Main Theorem (see, e.g., \cite[V, Thm 5.2]{hartshorne:algebraic_geometry}), 
there exists a divisor $E$ such that $\sigma(E)=X$, so that $E$ is birational to $X \times \PP^1$.
It follows that $E$ is the center $B_j$ of the blow-up $b_j$ for some $1 \leq j \leq r$, and such
$B_j$ is stably-birational to $X$. Since the numbers $h^{p,0}$ are stable birational invariants
we have $h^{n-2,0}(B_j) \neq 0$, and
then that $\mathrm{mcd}(B_j) = n-2$ by Corollary \ref{cor:maximal-motivic-dimension} and the fact that
$\dim(B_j) \leq n-2$. It follows that $\mathrm{mcd}(\phi_{n-2}^{-1}) = n-2$,
and the same holds for $\phi_{n-2}$ by Theorem \ref{thm:main}.

Now we proceed to construct a birational map $\psi$ of $\PP^n$ of motivic categorical dimension exactly $d+1$
for a given $0 \leq d \leq n-3$. 
Consider the standard birational map $\sigma: \PP^n \dashrightarrow \PP^{d+3} \times \PP^{n-d-3}$,
and let $\psi:= \sigma^{-1}\circ (\phi_{d+1},\id) \circ \sigma$, where $\phi_{d+1}$ is constructed as the above
$\phi_{n-2}$.

First of all, since $\sigma$ is obtained by resolving a linear projection, it clearly admits a weak factorization
whose centers are all (proper transforms of) linear subspaces, which have then motivic categorical dimension 0.
On the other hand, we have seen that any weak factorization of $\phi_{d+1}$ has motivic categorical dimension 
$d+1$. It follows that we can construct a weak factorization of $\psi$ of motivic categorical dimension 
$d+1$, so that $\mathrm{mcd}(\psi) \leq d+1$.

Let $X$ be the zero locus of $f$ in the corresponding linear subspace. In particular, $X$ is a $(d+1)$-dimensional
Calabi-Yau hypersurface in a hyperplane $\PP^{d+2} \subset \PP^{d+3}$, and we have $h^{d+1,0}(X) \neq 0$.
By the same argument above, since $\sigma^{-1}(X \times \PP^{d-n-3})$ is in the base locus of $\psi$,
for any weak factorization there has to be a $j$ such that $b_j$ has exceptional divisor $E$ birational
to $X \times \PP^{n-d-2}$. In particular, there is some $l$ such that $X \times \PP^{n-d-2}$ is birational to
$B_j \times \PP^l$, so that $X$ and $B_j$ are stably birational to each other.
Since the numbers $h^{p,0}$ are stable birational invariants, we have $h^{d+1,0}(B_j) \neq 0$, and
then that $\mathrm{mcd}(B_j) \geq d+1$ by Corollary \ref{cor:maximal-motivic-dimension}.
As a consequence, we have that $\mathrm{mcd}(\psi^{-1}) \geq d+1$, so that the equality holds also for $\psi$ as required.
\end{proof}

The proof of Theorem \ref{prop:arealfiltration} makes an extensive use of Hochschild homology. One
could define the \linedef{Hochschild dimension} of a pretriangulated category $\ka$ to be the maximal integer
$i$ such that $HH_i(\ka)\neq 0$, and the Hochschild dimension of a birational map by replacing the motivic categorical dimension
with the Hochschild dimension in Definition \ref{def:cat-dim-of-birmap}. The above proof would prove that this
notion also gives a proper filtration of the Cremona group.
However, the noncommutative motive is the universal additive invariant (see \cite[\S 2]{tabuada-book}), so that
motivic categorical dimension seems to be a more natural notion from a motivic point of view, and it
is also related to rationality problems (see \cite{auel-berna-frg} or \S \ref{subs:ncmrationaldefect}).
Finally, as we noticed in Proposition \ref{prop:cat2surfaces}, the notion of Hochschild dimension would be
weaker than the notion of motivic categorical dimension.

\begin{remark}\label{ex:hochdimisnotcatdim}
Suppose that $\phi: \PP^n \dashrightarrow \PP^n$ is a birational map whose base locus contains a closed subset (birational to) 
$\PP^l \times S$ with $S$ an Enriques surface. Then $\mathrm{mcd} (\phi) \geq 2$: this can be seen following the same argument in the
proof of Theorem \ref{prop:arealfiltration}, and the fact that, for surfaces, stable birationality coincides with birationality.
On the other hand, assume moreover that there exists a weak factorization $(b_1,c_1,\ldots,b_r,c_r)$
such that $HH_l(C_i)=0$ for $l\neq 0$. This is, a priori, possible, since $HH_l(S)=0$.
The birational map $\phi$ has then Hochschild dimension $0$ and motivic categorical dimension at least $2$.

It is not known to the author whether such a map $\phi$ exists, it would be
interesting to construct it, and to study the filtration on $\mathrm{Bir}_d(\PP^n)$ given by the Hochschild dimension.
\end{remark}

\section{The genus of a birational map}

\subsection{Jacobians of noncommutative Chow motives}\label{sec:Jacobians}
Let $k \subset \CC$ be algebraically closed.
Recall from Andr{\'e} \cite[\S4]{Andre} the construction of the category $\Chow(k)_\QQ$ of Chow motives
with rational coefficients and of the monoidal functor
$$ M(-)_\QQ:\SmProj(k)^\op \too \Chow(k)_\QQ\,,$$
where $\SmProj(k)$ is the category of smooth projective $k$-schemes.
As proved in \cite[Proposition 4.2.5.1]{Andre}, de Rham cohomology factors through Chow motives, so that 
every morphism of Chow motives induces a morphism in de Rham cohomology. For $X$ an irreducible $k$-scheme
of dimension $d$, we can consider the 
$\QQ$-vector spaces $NH_{dR}^{2i+1}(X), 0\leq i \leq d-1,$ defined by the formula
\begin{equation}\label{eq:pairings1}
\sum_C\sum_{\gamma \in \Hom(M(C)_\QQ, M(X)_\QQ(i))} \mathrm{Im}\left(H^1_{dR}(C) \stackrel{H^1_{dR}(\gamma)}{\too} H_{dR}^{2i+1}(X)\right)\,,
\end{equation}
where $C$ is a smooth and projective curve and $\gamma$ a morphism from $M(C)_\QQ$ to $M(X)_\QQ(i)$. Roughly speaking, the $NH_{dR}^{2i+1}(X)$ are the
odd pieces of de Rham cohomology that are generated by curves. Restricting the classical intersection bilinear pairings
on de Rham cohomology (see \cite[\S3.3]{Andre}) to these pieces gives pairings
\begin{eqnarray}\label{eq:pairings2}
\langle-,-\rangle: NH_{dR}^{2d-2i-1}(X) \times NH_{dR}^{2i+1}(X) \too k && 0 \leq i \leq d-1\,.
\end{eqnarray}

Recall that Marcolli and Tabuada \cite{marcolli-tabuada-jac} have defined the \linedef{Jacobian} ${\bf J}(N)$ of a noncommutative motive $N$
as an Abelian variety well-defined up to isogeny. We refrain to give the detailed definition here, the interested reader can consult \cite{marcolli-tabuada-jac}
or \cite[Ch. 7]{tabuada-book}. In particular, given a smooth and projective variety $X$ and an admissible subcategory $\ka$ of $\Db(X)$,
one can define the Jacobian ${\bf J}(\ka)$ as the Jacobian of the noncommutative motive of $\ka$ as an Abelian variety well-defined up to isogeny.

%

On the other hand, recall the construction of the intermediate Jacobians
$$J^i(X):= \frac{F^{i+1}H^{2i+1}(X,\CC)}{H^{2i+1}(X,\ZZ)},$$
for $0 \leq i \leq d-1$, where $F^\bullet$ is the Hodge filtration on Betti cohomology. The intermediate Jacobians are complex tori, not (necessarily) algebraic.
Nevertheless, they contain an algebraic torus $J^i_a(X) \subseteq J^i(X)$ defined as the subtorus generated by the image of
the Abel-Jacobi map
\begin{eqnarray}
AJ^i: A^{i+1}(X)_\ZZ \twoheadrightarrow J^i(X) && 0 \leq i \leq d-1\,,
\end{eqnarray}
where $A^{i+1}(X)_\ZZ$ denotes the group of algebraically trivial cycles of codimension $i+1$;
see Voisin \cite[\S 12]{voisin-book} for further details.
The algebraic intermediate Jacobian $J^i_a(X)$ is an Abelian variety, well-defined up to isogeny.

As proved in \cite[Theorem~1.7]{marcolli-tabuada-jac}, whenever the above pairings \eqref{eq:pairings2} are non-degenerate for all $i$,
one has an isomorphism 
\begin{equation}\label{eq:nc-jac=jac}
{\bf J}(\Db(X)) \simeq \prod_{i=0}^{d-1} J^i_a(X) \text{ in } \mathrm{Ab}(k)_\QQ,
\end{equation}
where $\mathrm{Ab}(k)_\QQ$ is the category of Abelian varieties up to isogeny.

As explained in
{\em loc. cit.}, \eqref{eq:pairings2} is always non-degenerate for $i=0$ and $i=d-1$. Moreover, if Grothendieck's standard
conjecture of Lefschetz type is true for $X$, then \eqref{eq:pairings2} is non-degenerate for all $i$; see Vial \cite[Lemma 2.1]{Vial}.

Since semiorthogonal decompositions induce decompositions of noncommutative motives in direct summands
(see \cite{tabuada-book}), it follows that if $\ka=\sod{\ka_1,\ldots,\ka_r}$ is a semiorthogonal decomposition, then
${\bf J}(\ka)={\bf J}(\ka_1) \oplus \ldots \oplus {\bf J}(\ka_r)$. On the other hand, since the category generated by
an exceptional object is equivalent to the category of a point, it has trivial noncommutative Jacobian.
It follows that, if $\ka$ is generated by exceptional objects, then ${\bf J}(\ka)=0$.

%
%

\subsection{Weak factorizations and Jacobians}
Let $k\subset \CC$ be algebraically closed.
Recall that in our notations, for a weak factorization $(b_1,c_1,\ldots,b_r,c_r)$ we have that, for $i=1,\ldots,r$,
the map $b_i$ blows-up (possibly empty) smooth centers $B_i$ which have codimension $\beta_i$; and $c_i$ blows-up 
(possibly empty) smooth centers $C_i$ which have codimension $\gamma_i$.

\begin{proposition}\label{prop:dimension-of-jacs}
Suppose $\phi: X \dashrightarrow Y$ has a weak factorization $(b_1,c_1,\ldots,b_r,c_r)$ as above, and set $n:=\dim(X)$.
Then we have
\begin{equation}\label{eq:dims-jacs}
\dim ({\bf J}(X)) + \sum_{i=1}^r (\beta_i-1) \dim ({\bf J}(B_i))=
\dim ({\bf J}(Y)) + \sum_{i=1}^r (\gamma_i-1) \dim ({\bf J}(C_i))
\end{equation}
\end{proposition}

\begin{proof}

Let us first consider the diagram:
$$\xymatrix{
& Y_i \ar[dl]_{b} \ar[dr]^c \\
X_{i-1} & & X_{i}.
}$$
Applying blow-up formula we obtain two semiorthogonal decompositions of
$\Db(Y_i)$ inducing two decompositions of its noncommutative motive, and hence of its noncommutative Jacobian

$${\bf J}(Y_i) = {\bf J}(X_{i-1}) \oplus {\bf J}(B_i)^{\oplus \beta_i-1} = {\bf J}(X_i) \oplus {\bf J}(C_i)^{\oplus \gamma_i-1},$$

as Abelian varieties up to isogeny, as explained in \S \ref{sec:Jacobians}. Calculating dimensions,
this yields:
$$\dim ({\bf J}(X_i)) - \dim({\bf J}(X_{i-1}))=(\beta_i-1)\dim({\bf J}(B_i))-(\gamma_i-1)\dim({\bf J}(C_i)),$$
and the proof follows by summing up the above equation for $i=1,\ldots,r$.
\end{proof}

\begin{corollary}\label{cor:cor-n-1}
Let $\phi: X \dashrightarrow Y$ be a birational map as in Proposition \ref{prop:dimension-of-jacs}.
If we assume furthermore that $\dim({\bf J}(X)) = \dim({\bf J}(Y))$, then we have

$$\sum_{i=1}^r (\beta_i-1)\dim ({\bf J}(B_i))=\sum_{i=1}^r (\gamma_i-1))\dim ({\bf J}(C_i)).$$

And, assuming moreover that the pairings \eqref{eq:pairings2} are nondegenerate for $B_i$ and $C_i$ for all $i$, we obtain:

$$\sum_{i=1}^r (\beta_i-1)\dim(\prod_{l=0}^{n-\beta_i-1} J^l_a(B_i))=\sum_{i=1}^r (\gamma_i-1)\dim(\prod_{l=0}^{n-\gamma_i-1} J^l_a(C_i)).$$
\end{corollary}

\begin{proof}
The first statement is a straightforward corollary of Proposition \ref{prop:dimension-of-jacs}. For the second, just note that
the we can apply \eqref{eq:nc-jac=jac} to all of the $B_i$ and $C_i$, and formula follows from
$\dim(B_i)=n-\beta_i$ and $\dim(C_i)=n-\gamma_i$.
\end{proof}

\begin{corollary}\label{cor:sum-of-genuses}
Let $\phi: X \dashrightarrow Y$ be as in Proposition \ref{prop:dimension-of-jacs},
and assume that $\mathrm{dim}({\bf J}(X))=\mathrm{dim}({\bf J}(Y))$ and that $B_i$ and $C_i$ have dimension at most one.
This is the case in particular if $n=3$. Then the above formula \eqref{eq:dims-jacs} yields:
$$\sum_{i=1}^r  g(B_i)=
\sum_{i=1}^r g(C_i),$$
where we set, by convention, $g(Z)=0$ if $\dim(Z)=0$ or $Z$ is empty.
\end{corollary}
\begin{proof}
Notice that, under our assumptions, $B_i$ and $C_i$ have only one nontrivial Jacobian if and only if they are curves of positive genus, in which
case the dimensions of the Jacobians are $g(B_i)$
and $g(C_i)$ respectively. Moreover, in \eqref{eq:dims-jacs}, either $g(B_i)=0$ (resp. $g(C_i)=0$) or
$\beta_i=n-1$ (resp. $\gamma_i=n-1$), so that the factor $(n-2)$ can be simplified. 
\end{proof}


\subsection{A generalization for the genus of birational transformations of rationally connected threefolds}

The notion of an incidence principal polarization on intermediate Jacobians was introduced by Beauville, see
\cite[\S 3.4]{beauvilleprym}. In the case where the pairings \eqref{eq:pairings2} are nondegenerate for $X$, and $X$ admits
a unique nontrivial intermediate Jacobian $J(X)$ which is principally polarized by an
incidence polarization, we say that $X$ is verepresentable. In particular, $X$ must be of odd dimension 
$2n+1$ and $J(X)=J^n(X)$.
All smooth projective curves, many Fano threefolds, all projective spaces,
quadric hypersurfaces and intersections of two quadrics, or of three even-dimensional quadrics are verepresentable. 
We refrain to give any more detail here, we refer to \cite{berna-tabuada-jacobians} for that, and we just recall that,
in these cases, the Jacobian ${\bf J}(X)$ contains
the information about the principal polarization of $J(X)$, as shown by the following theorem \cite[Thm. 1.7]{berna-tabuada-jacobians}.

\begin{theorem}\label{thm:polarization-BT}
 Let $X$ and $Y$ be verepresentable varieties, such that the pairings \eqref{eq:pairings2} are nondegenerate for both $X$
 and $Y$, and suppose that
 $$\begin{array}{cc}
\Db(X) = \sod{\ka,\kb} \,\,\,\,\, & \Db(Y) = \sod{\ka',\kc}.
   \end{array}$$
Assuming that ${\bf J}(\kb)=0$, and that $\ka \simeq \ka'$ as pretriangulated dg categories, there is an injective morphism
of principally polarized Abelian varieties $\tau: J(X) \to J(Y)$, that is $J(Y)=J(X) \oplus A$ for some Abelian variety
$A$. Moreover, if ${\bf J}(\kc)=0$ as well, $\tau$ is an isomorphism.
\end{theorem}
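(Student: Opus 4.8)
The plan is to reduce Theorem \ref{thm:polarization-BT} to the already-cited results of Bernardara--Tabuada on reconstruction of intermediate Jacobians from semiorthogonal decompositions, combined with the additivity of the noncommutative Jacobian functor ${\bf J}(-)$ established in \S \ref{sec:Jacobians}. First I would apply ${\bf J}(-)$ to the two semiorthogonal decompositions $\Db(X) = \sod{\ka,\kb}$ and $\Db(Y) = \sod{\ka',\kc}$. By additivity of ${\bf J}$ along semiorthogonal decompositions this gives ${\bf J}(X) = {\bf J}(\ka) \oplus {\bf J}(\kb)$ and ${\bf J}(Y) = {\bf J}(\ka') \oplus {\bf J}(\kc)$ in $\mathrm{Ab}(k)_\QQ$. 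Using the hypothesis ${\bf J}(\kb)=0$ and the equivalence $\ka \simeq \ka'$, one obtains ${\bf J}(X) \simeq {\bf J}(\ka) \simeq {\bf J}(\ka')$, which is a direct summand of ${\bf J}(Y)$; writing ${\bf J}(Y) = {\bf J}(X) \oplus {\bf J}(\kc)$ gives the decomposition with $A := {\bf J}(\kc)$ (in the isogeny category), and if moreover ${\bf J}(\kc)=0$ then ${\bf J}(X)\simeq {\bf J}(Y)$.

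The next step upgrades this isogeny-level statement to a statement about principally polarized abelian varieties. Since $X$ and $Y$ are verepresentable, the hypothesis says each has a unique nontrivial intermediate Jacobian, $J(X)$ and $J(Y)$, principally polarized by the incidence polarization; and since the pairings \eqref{eq:pairings2} are nondegenerate, the cited result \cite[Theorem~1.7]{marcolli-tabuada-jac} identifies ${\bf J}(\Db(X))$ with $\prod_i J^i_a(X)$, which under verepresentability collapses to $J(X)$, and similarly for $Y$. The key input here is that, as shown in \cite{berna-tabuada-jacobians}, the noncommutative Jacobian ${\bf J}(\ka)$ of an admissible subcategory $\ka\subset\Db(X)$ remembers not just the underlying abelian variety but also the polarization data induced by the incidence pairing. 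Thus the equivalence $\ka\simeq\ka'$ as pretriangulated dg categories transports the principal polarization of $J(X)$ to the one on the corresponding summand of $J(Y)$, and the inclusion of the summand ${\bf J}(\ka')\hookrightarrow {\bf J}(Y)$ is a morphism of polarized abelian varieties. Poincaré reducibility then splits off a complement $A$, giving the desired injective morphism $\tau: J(X)\to J(Y)$ with $J(Y) = J(X)\oplus A$ as polarized abelian varieties, and $\tau$ an isomorphism when $A=0$, i.e. when ${\bf J}(\kc)=0$.

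I expect the main obstacle to be the passage from the isogeny category $\mathrm{Ab}(k)_\QQ$ — where ${\bf J}$ naturally takes values and where summands are easy to manipulate — to genuine abelian varieties with their integral principal polarizations, as demanded by the statement. One must be careful that the incidence polarization is really the one being reconstructed (this is the content of the Bernardara--Tabuada reconstruction, so I would invoke it as a black box rather than reprove it), and that the splitting ${\bf J}(Y)={\bf J}(X)\oplus A$ is compatible with polarizations and not merely with isogeny classes. The verepresentability hypothesis is precisely what makes this manageable, since it pins down a single principally polarized Jacobian on each side with no ambiguity; the remaining work is bookkeeping to check that the functorial equivalence $\ka\simeq\ka'$ induces a morphism of polarized abelian varieties, which follows from the naturality of the constructions recalled in \S \ref{sec:Jacobians}.
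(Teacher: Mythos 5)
First, a structural observation: this theorem is not proved in the paper. The label \texttt{polarization-BT} and the sentence immediately preceding it (``as shown in \cite{berna-tabuada-jacobians}'') indicate that the statement is imported verbatim from Bernardara--Tabuada and used as a cited result. There is therefore no in-paper proof to compare your proposal against; what can be assessed is whether your reconstruction of the underlying argument holds up.

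Your first paragraph is correct and is genuinely the easy half: additivity of ${\bf J}$ along semiorthogonal decompositions gives ${\bf J}(\Db(X))\simeq {\bf J}(\ka)\oplus{\bf J}(\kb)$ and ${\bf J}(\Db(Y))\simeq {\bf J}(\ka')\oplus{\bf J}(\kc)$; the hypotheses ${\bf J}(\kb)=0$ and $\ka\simeq\ka'$ exhibit ${\bf J}(\Db(X))$ as a direct summand of ${\bf J}(\Db(Y))$ in $\mathrm{Ab}(k)_\QQ$; and nondegeneracy of \eqref{eq:pairings2} plus verepresentability identify these with $J(X)$ and $J(Y)$ up to isogeny via \cite[Theorem 1.7]{marcolli-tabuada-jac}. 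That much is fine.

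The gap is precisely where you flag it, and it is not mere bookkeeping. The functor ${\bf J}$ takes values in the isogeny category $\mathrm{Ab}(k)_\QQ$ and sees no polarization data at all; an abelian variety up to isogeny typically carries many inequivalent principal polarizations, so the claim that ``${\bf J}(\ka)$ remembers the polarization'' is simply not a property of the construction recalled in \S \ref{sec:Jacobians}. Transporting the incidence polarization of $J(X)$ along the dg equivalence $\ka\simeq\ka'$ to a principally polarized abelian subvariety of $J(Y)$ is the \emph{entire content} of the cited theorem, so invoking ``the Bernardara--Tabuada reconstruction'' as a black box here is circular: that reconstruction \emph{is} the statement you are asked to prove. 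Closing the gap requires working at the level of the correspondences underlying \eqref{eq:pairings1}--\eqref{eq:pairings2} and showing that an equivalence of admissible subcategories induces a correspondence between $X$ and $Y$ compatible with the intersection forms, hence with the incidence polarizations; none of this appears in your sketch. A further small issue: Poincar\'e reducibility produces a complementary abelian subvariety of $J(Y)$ but does not by itself make the splitting $J(Y)=J(X)\oplus A$ one of principally polarized abelian varieties — the compatibility of polarizations across the splitting also has to be argued.
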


\begin{definition}
Let $X$ and $Y$ be verepresentable $2n+1$-folds. 
A birational map $\phi: X \dashrightarrow Y$ is \linedef{well-polarized} if there is a weak factorization of
type $(b_1,c_1,\ldots,b_r,c_r)$ such that both the centers $B_i$ and $C_i$ and all the varieties $X_i$ and $Y_i$ 
appearing in the weak factorization are verepresentable.

If $\phi$ is well-polarized by a weak factorization $(b_1,c_1,\ldots,b_r,c_r)$, we call the \linedef{Abelian type}
of $\phi$ the collection $J(C_i)$ of all nontrivial
algebraic Jacobians of the centers $C_i$. If $S_A$ is a set of indecomposable principally polarized Abelian varieties
such that all of the $J(C_i)$ are isomorphic as principally polarized Abelian varieties to 
sums of elements of $S_A$, we say that the Abelian type of the weak factorization is \linedef{split by $S_A$}.
If, moreover, $S_A$ only contains Jacobian of curves, we say that the Abelian type is \linedef{Jacobian}.
\end{definition}

\begin{remark}
Notice that the weak factorization $(b_1,c_1,\ldots,b_r,c_r)$ of $\phi$ is well-polarized, then the centers $B_i$ and $C_i$
either have trivial Jacobian or have codimension $2$. Indeed, blowing-up $B_i$ adds $\beta_i-1$ copies of the Hodge structure
of $B_i$ to the Hodge structure of $X_{i-1}$. So that if $\beta_i >2$, we need the Jacobian of $B_i$ to be trivial if we want $Y_i$
to be verepresentable. A similar argument works for $C_i$.
\end{remark}

\begin{remark}\label{rem:3fold-polarized}
In the case $n=3$, and $X$ and $Y$ verepresentable varieties, every weak factorization of a birational map $\phi: X \dashrightarrow Y$ is
well-polarized of Jacobian Abelian type. Indeed, the centers are either points or smooth projective curves.
\end{remark}

The next Proposition uses Theorem \ref{thm:polarization-BT} to produce subgroups of $\mathrm{Bir}(X)$. Notice that
similar arguments were used by Clemens and Griffiths \cite{clemens_griffiths} to show that any rational complex threefold has 
its intermediate
Jacobian split by Jacobians of curves as a principally polarized Abelian variety. A categorical rephrasing of this criterion can be found in
\cite{bolo_berna:conic} (see also \cite{bolognesi_bernardara:representability} and \cite{auel-berna-frg}).

\begin{proposition}\label{prop:filtration-by-SA}
Let $X$ be a verepresentable $(2n+1)$-fold. Given a set of indecomposable principally polarized Abelian varieties $S_A$,
the set of birational transformations $\phi$ well-polarized by weak factorizations of Abelian type split by $S_A$ is a 
subgroup of $\mathrm{Bir}(X)$.
\end{proposition}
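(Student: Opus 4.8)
The plan is to mimic the two-step structure of the proof of Theorem~\ref{thm:main}: stability under composition is purely formal, while the substance lies in stability under inversion, where the roles played there by the Grothendieck ring $PT(k)$, the motivic measure $\mu$ and Bittner's presentation are now played by the semisimple category $\mathrm{Ab}(k)_\QQ$, the Jacobian functor ${\bf J}(-)$ of \S\ref{sec:Jacobians}, and the polarization-compatible blow-up decompositions of \cite{berna-tabuada-jacobians} (of which Theorem~\ref{thm:polarization-BT} is a sample).

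\emph{Composition.} Let $\phi,\psi\colon X\dashrightarrow X$ be well-polarized with Abelian type split by $S_A$, via weak factorizations of types $(b_1,c_1,\ldots,b_r,c_r)$ and $(b'_1,c'_1,\ldots,b'_{r'},c'_{r'})$. Gluing the two zig-zags at the common vertex $X_r=X=X'_0$ yields a weak factorization of $\psi\circ\phi$; every variety occurring in it, and every blow-up center occurring in it, already occurs in one of the two given factorizations, hence is verepresentable, and the $c$-type centers of the glued factorization are precisely the $C_{i,j}$ of $\phi$ together with the $C'_{i,j}$ of $\psi$, whose nontrivial algebraic Jacobians are split by $S_A$. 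Thus $\psi\circ\phi$ is again well-polarized of Abelian type split by $S_A$, and the identity is trivially so (with empty Abelian type). It remains to handle inverses.

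\emph{Inversion.} Fix a well-polarized $\phi\colon X\dashrightarrow X$ with a weak factorization of type $(b_1,c_1,\ldots,b_r,c_r)$, $b_i$-centers $B_{i,j}$ of codimension $\alpha_{i,j}$ and $c_i$-centers $C_{i,j}$ of codimension $\beta_{i,j}$. Reading the same zig-zag backwards is a weak factorization of $\phi^{-1}$, of type $(c_r,b_r,\ldots,c_1,b_1)$: it uses the same varieties $X_i,Y_i$ and the same blow-up centers, with the $B_{i,j}$ now in $c$-type position; since the verepresentability requirements on all varieties and centers are symmetric under this reversal, $\phi^{-1}$ is well-polarized, and we only have to check that each nontrivial $J(B_{i,j})$ is split by $S_A$. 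Applying the blow-up formula to each roof $X_{i-1}\xleftarrow{b_i}Y_i\xrightarrow{c_i}X_i$ and passing to noncommutative Jacobians as in the proof of Proposition~\ref{prop:dimension-of-jacs}, one obtains for each $i$ an equality $\,{\bf J}(X_{i-1})\oplus\bigoplus_j{\bf J}(B_{i,j})^{\oplus(\alpha_{i,j}-1)}={\bf J}(X_i)\oplus\bigoplus_j{\bf J}(C_{i,j})^{\oplus(\beta_{i,j}-1)}\,$ of principally polarized abelian varieties up to isogeny, the polarizations being the incidence ones and compatible with both decompositions by \cite{berna-tabuada-jacobians} (this is where verepresentability and the non-degeneracy of \eqref{eq:pairings2} for every variety and center are used). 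Summing over $i=1,\ldots,r$, cancelling the telescoping terms ${\bf J}(X_1),\ldots,{\bf J}(X_{r-1})$ and then ${\bf J}(X)={\bf J}(X_0)={\bf J}(X_r)$ — cancellation being legitimate since $\mathrm{Ab}(k)_\QQ\subset\NNum(k)_\QQ$ is semisimple — gives $\,\bigoplus_{i,j}{\bf J}(B_{i,j})^{\oplus(\alpha_{i,j}-1)}=\bigoplus_{i,j}{\bf J}(C_{i,j})^{\oplus(\beta_{i,j}-1)}\,$ in $\mathrm{Ab}(k)_\QQ$, again as principally polarized abelian varieties up to isogeny. By hypothesis the right-hand side is split by $S_A$; invoking the uniqueness, up to isogeny and reordering, of the decomposition of a principally polarized abelian variety into indecomposable principally polarized factors — the classical fact behind the Clemens--Griffiths criterion \cite{clemens_griffiths} — every indecomposable principally polarized factor of the left-hand side, and hence of each summand ${\bf J}(B_{i,j})$ (with $\alpha_{i,j}\geq 2$), lies in $S_A$. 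Thus every nontrivial $J(B_{i,j})$ is split by $S_A$, so $\phi^{-1}$ is well-polarized of Abelian type split by $S_A$; together with the composition step this proves the subgroup property.

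\emph{Where the difficulty sits.} The only step beyond bookkeeping is that everything must be done with principal polarizations, not merely with isogeny classes of abelian varieties: the coarser statement — control of the underlying abelian varieties up to isogeny — would not retain enough information to track the set $S_A$ (an indecomposable principally polarized abelian variety need not have simple underlying abelian variety). Two imported ingredients make the polarized argument work: the fact that the blow-up semiorthogonal decomposition of a verepresentable variety along a verepresentable center splits the incidence-polarized Jacobian as a product of incidence-polarized Jacobians (\cite{berna-tabuada-jacobians}, extending Theorem~\ref{thm:polarization-BT}), which is precisely why the non-degeneracy hypothesis on \eqref{eq:pairings2} is imposed on all centers throughout this section; and the Krull--Schmidt-type uniqueness for principally polarized abelian varieties up to isogeny, which licenses both the cancellation of ${\bf J}(X)$ and the transfer of the ``split by $S_A$'' property from the $C_{i,j}$ back to the $B_{i,j}$.
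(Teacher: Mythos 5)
Your proposal is correct and follows essentially the same route as the paper: composition is by juxtaposition of weak factorizations, and inversion is handled by reversing the zig-zag and tracking the polarized Jacobian decompositions roof by roof, using semisimplicity (Krull--Schmidt) of principally polarized abelian varieties up to isogeny to transfer the ``split by $S_A$'' property from the $C_{i,j}$ to the $B_{i,j}$. You have made explicit the telescoping cancellation that the paper compresses into ``iterated application of Theorem~\ref{thm:polarization-BT} together with semisimplicity,'' and you correctly flag that the whole point is to work with principal polarizations (via \cite{berna-tabuada-jacobians}) rather than mere isogeny classes.
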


\begin{proof}
By assumption  there is a weak factorization $(b_1,c_1,\ldots,b_r,c_r)$ 
of $\phi$ such that
all the varieties involved are verepresentable and the intermediate Jacobians of the $C_i$ are direct sums of 
elements of $S_A$.
Consider the first diagram:
$$\xymatrix{X & Y_1 \ar[l]_{b_1} \ar[r]^{c_1} & X_1.}$$
By assumption, we have
\begin{equation}\label{eq:first-jacobians}
J(X) \oplus J(B_1) = J(X_1) \oplus J(C_1)
\end{equation}
as principally polarized Abelian varieties. If $r=1$, that is, if $X_1=Y$, we have $J(X_1)=J(X)$ by assumption and we can conclude.
Otherwise, consider the next diagram
$$\xymatrix{X_1 & Y_2 \ar[l]_{b_2} \ar[r]^{c_2} & X_2,}$$
from which we obtain
\begin{equation}\label{eq:second-jacobians}
J(X_1) \oplus J(B_2) = J(X_2) \oplus J(C_2).
\end{equation}
Combining \eqref{eq:first-jacobians} and \eqref{eq:second-jacobians} we obtain:
$$J(X) \oplus J(B_1) \oplus J(B_2) = J(X_1) \oplus J(C_1) \oplus J(B_2) = J(X_2) \oplus J(C_1) \oplus J(C_2)$$
as principally polarized Abelian varieties. Recursively, running over the whole weak factorization, we obtain
$$J(X) \oplus \bigoplus_{i=1}^r J(B_i) = J(Y) \oplus \bigoplus_{i=1}^r J(C_i)$$
and hence, since $J(X)=J(Y)$ and the category of principally polarized Abelian varieties is semisimple,
the Jacobians $J(B_i)$ are also split by elements of $S_A$.

Finally, let $\phi$ and $\psi$ be well-polarized by weak factorizations of Abelian type split by $S_A$.
Then also $\phi \circ \psi$ is well polarized since it admits a weak factorization
which is just given by juxtaposition of the weak factorizations of $\phi$ and $\psi$, and it is clear that 
such a factorization has Abelian 
type split by $S_A$.
\end{proof}

\begin{question}
Construct nontrivial examples of well-polarized birational maps which are not isomorphisms, admitting a weak factorization whose Abelian type is
not split by the set $\{0\}$ for varieties of dimension $\geq 5$, for example
for $\PP^5$. Note that the quadro-quadric Cremona transformations of $\PP^n$ described in \cite{ein-sheperdB} for $n=5,8,14$ are not good candidates
since they admit a factorization involving only varieties with $hh_i=0$ for $i \neq 0$ (and most likely also the one for $n=26$). An example
must involve varieties with nontrivial Jacobian, such as te cubic threefold, for example. The difficult step here would be to show that \it any \rm
weak factorization must involve varieties with such Jacobian.
\end{question}

\begin{definition}
If $\phi$ is well-polarized by a weak factorization $(b_1,c_1,\ldots,b_r,c_r)$, 
the maximum $\mathrm{max}\{\mathrm{dim}(J(C_i))\}$ over all the centers $C_i$ is called the genus of the given
weak factorization. The \linedef{genus} $g(\phi)$ of
the birational map $\phi$ is the smallest genus of any possible well-polarized weak factorization.
\end{definition}

Let $X$ and $Y$ be verepresentable threefolds, and $\phi:X\dashrightarrow Y$ a birational map. 
Recall that by \linedef{regular resolution} of $\phi^{-1}$, we mean an iterated blow-up
$\sigma: Y' \to Y$ along smooth centers such that there is a birational morphism $\rho: Y' \to X$ and the 
composition $\rho\circ \sigma^{-1} = \phi^{-1}$ is $\phi^{-1}$.
Frumkin \cite{frumkin} defined a notion of genus of $\phi$, which we denote by $g_F(\phi)$, to be the maximum of the genera among
the centers of the blow-ups in regular resolutions of $\phi^{-1}$. As proved by Frumkin \cite[Corollary 1.2.4]{frumkin},
this does not depend on the choice of the regular resolution. On the other hand, Lamy defines the genus of a birational
map to be the maximal genus of a curve $C$ such that there exist a divisor $D$ in $X$ contracted by $\phi$, birational to $C\times \PP^1$,
and shows that his definition coincides with Frumkin's one \cite{lamy-genus}.

\begin{proposition}\label{prop:frumkin}
Let $X$ and $Y$ be verepresentable threefolds and $\phi: X \dashrightarrow Y$ a birational map. Then $\phi$ is well-polarized
and any weak factorization of $\phi$ has Jacobian Abelian type. Moreover, $g_F(\phi) \leq g(\phi)$.
\end{proposition}
\begin{proof}
First of all, it is easy to see that any weak factorization of $\phi$ is a composition of blow-ups along curves or points.
It follows that $\phi$ is well polarized and the weak factorization is of Jacobian Abelian type.

Suppose that we are given $\phi$ with genus $g_F(\phi) >0$. Then any regular resolution of $\phi$ blows up a curve $C$ with $g(C) = g_F (\phi)$. We can now proceed
as in the proof of Theorem \ref{prop:arealfiltration} to show that if $(b_1,c_1,\ldots,b_r,c_r)$ is a weak factorization of $\phi$,
then there must be an $i$, $1\leq i \leq r$ such that $g(C_i)=g(C)$. We have then that $g_F(C) \leq g(\phi)$. 
\end{proof}

\begin{question}\label{question:genera}
Let $\phi$ be as in Proposition \ref{prop:frumkin}. Do we have $g(\phi)=g_F(\phi)$? 
\end{question}

One would expect a positive answer to Question \ref{question:genera}, so that the genus defined via the weak factorization
would be indeed a generalization of Frumkin's genus to higher dimensional cases. However, a negative answer would be a very
interesting and deep result. Let us sketch a little argument reducing the question to maps with trivial Frumkin genus.

Let $\phi$ be as in Proposition \ref{prop:frumkin}, and consider a regular resolution $X \stackrel{\rho}{\leftarrow} Y' \stackrel{\sigma}{\to} Y$
of $\phi^{-1}$, and a regular resolution
$Y \stackrel{\rho'}{\leftarrow} X' \stackrel{\sigma'}{\to} X$ of $\phi$, and the commutative diagram:
$$\xymatrix{
X' \ar[d]_{\sigma'} \ar@{-->}[r]^\psi & Y' \ar[d]^\sigma \\
X \ar@{-->}[r]^\phi & Y.
}$$
In particular, thanks to \cite[Prop. 2.2]{frumkin} (see also \cite[Prop. 7]{lamy-genus}), the map $\psi$ has genus
zero, hence there is a regular resolution $Y' \stackrel{f}{\leftarrow} X'' \stackrel{\tau'}{\to} X'$ of $\psi$ 
where $\tau'$ blows-up points and rational curves. If one was able to construct
a weak factorization of $f$ whose centers are points or rational curves only, then the juxtaposition of $\sigma'$, $\tau'$, such
weak factorization, and $\sigma$ would be a weak factorization of $\phi$ with genus exactly $g_F(\phi)$. Hence, question
\ref{question:genera} reduces to the following question: suppose $\phi: X \to Y$ is a birational morphism between verepresentable
threefolds with $g_F(\phi)=0$. Is it possible to construct a weak factorization of $\phi$ of genus zero?

Notice that, using Lamy's definition of $g_F(\phi)$, one can show that the irreducible components of the
exceptional locus of such a $\phi$ are either rational curves of rational surfaces. If one was able to construct a map $\phi$
such that $\phi$ has no weak factorization of genus zero, we would have a negative answer to Question \ref{question:genera}.
This would be a quite interesting example, since it would show that the group of genus zero maps is strictly smaller
than the group of the maps whose exceptional locus has rational connected components. On the other hand, as 
we will see in the next Section, all the elements of the subgroup of $\Bir(\PP^3)$ generated by the standard Cremona transformation
have genus zero.

\section{Toric versus rational centers}\label{sec:refinements}

\subsection{The noncommutative motivic rational defect and $\mathrm{Bir}_{n-4}(X)$.}\label{subs:ncmrationaldefect}
Let $Y$ be a rational $n$-dimensional variety. As remarked in \cite{galkin-shinder-cubic},
from the existence of weak factorizations for birational maps, one has
\begin{equation}\label{eq:mot-rat-defect}
[Y] = [\PP^n] + \LL M_Y
\end{equation}
in $K_0(\mathrm{Var}(k))$, where $M_Y$ is a $\ZZ$-linear combination of classes of varieties of
dimension bounded above by $n-2$. Galkin
and Shinder define then $([Y]-[\PP^n])/\LL \in K_0(\mathrm{Var}(k))[\LL^{-1}]$ as the
\linedef{rational defect} of $Y$ \cite{galkin-shinder-cubic}.
Applying to \eqref{eq:mot-rat-defect} the motivic measure $\mu$ defined in \eqref{eq:def-of-mbll},
and recalling that $\mathrm{mcd}(\PP^n)=0$, we obtain the following statement (see
\cite[Prop. 8.1.2]{auel-berna-frg}).

\begin{proposition}\label{prop:motivic-nv-obstruct}
If $Y$ is rational smooth, projective, then $\mathrm{mcd}(Y) \leq \mathrm{max}\{0,n-2\}$. 
\end{proposition}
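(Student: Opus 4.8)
The plan is to apply the motivic measure $\mu$ directly to the identity \eqref{eq:mot-rat-defect}, which holds in $K_0(\mathrm{Var}(k))$ by weak factorization (as observed in \cite{galkin-shinder-cubic}). Writing $M_Y = \sum_\ell n_\ell [Z_\ell]$ as a $\ZZ$-linear combination of classes of smooth projective varieties $Z_\ell$ with $\dim(Z_\ell) \leq n-2$, applying $\mu$ gives
$$
I(Y) = \mu([\PP^n]) + \mu(\LL)\cdot \sum_\ell n_\ell \, I(Z_\ell) = (n+1)\cdot 1 + \sum_\ell n_\ell \, I(Z_\ell),
$$
where I have used $\mu([\PP^n]) = n+1$ and $\mu(\LL)=1$ as recorded after \eqref{eq:def-of-mbll}. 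Rearranging, $I(Y) - \sum_\ell n_\ell I(Z_\ell)$ equals $(n+1)$ times the unit $1 = I(\Db(\Spec k))$, which is the class of $\Db(\PP^n)$, a category with a full exceptional collection, hence $\Rep(\Db(\PP^n)) = 0$, so this element lies in $PT_0(k) \subset PT_{n-2}(k)$.

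Next I would bound the motivic categorical dimension of each $Z_\ell$: since $\dim(Z_\ell) \leq n-2$, Lemma \ref{lemma:compare-mcd-catrep} gives $\mathrm{mcd}(Z_\ell) \leq \Rep(Z_\ell) \leq \dim(Z_\ell) \leq n-2$, so each $I(Z_\ell)$ lies in $PT_{n-2}(k)$. Because $PT_{n-2}(k)$ is an additive subgroup of $PT(k)$ by Proposition \ref{prop:the-filtration}, the combination $\sum_\ell n_\ell I(Z_\ell)$ lies in $PT_{n-2}(k)$. Adding back the element $(n+1)\cdot 1 \in PT_0(k) \subset PT_{n-2}(k)$, we conclude $I(Y) \in PT_{n-2}(k)$, i.e. $\mathrm{mcd}(Y) \leq n-2$, which is exactly the assertion.

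The argument is essentially a bookkeeping exercise once \eqref{eq:mot-rat-defect} is granted; the one point requiring a little care is that $M_Y$ is genuinely a $\ZZ$-linear combination (with possibly negative coefficients) of classes of varieties of dimension at most $n-2$ — this is precisely what the weak-factorization presentation of $K_0(\mathrm{Var}(k))$ via blow-ups delivers, since each blow-up relation $[\widetilde{W}] - [E] = [W] - [Z]$ introduces only the center $Z$ and its projective bundle $E = \PP(N_{Z/W})$, both of dimension at most $\dim(W)-1$, and the exceptional divisor contributes a $\PP^{c-1}$-bundle whose class expands, via the projective bundle formula $[E] = [Z]([\PP^{c-1}]) $, into terms of the form $\LL^j [Z]$ with $j \geq 0$. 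So the only possible obstacle is keeping track of signs and of the fact that the filtration $PT_{n-2}(k)$ being a subgroup (not merely a monoid) is what allows negative coefficients to cause no trouble; this is already guaranteed by Proposition \ref{prop:the-filtration}. Hence there is no serious obstacle, and the proof reduces to the displayed computation above.
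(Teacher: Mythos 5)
Your proof is correct and follows exactly the route the paper intends: apply the Bondal--Larsen--Lunts measure $\mu$ to the identity \eqref{eq:mot-rat-defect}, use $\mu(\LL)=1$ and $\mathrm{mcd}(\PP^n)=0$, bound each $\mathrm{mcd}(Z_\ell)$ by Lemma \ref{lemma:compare-mcd-catrep}, and invoke the fact that $PT_{n-2}(k)$ is an additive subgroup (Proposition \ref{prop:the-filtration}) to absorb the $\ZZ$-linear combination. The paper gives this argument only as a one-line sketch; your write-up is the same argument with the bookkeeping filled in.
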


We call the class of $I(Y)$ in the $\ZZ$-module $PT(k)/PT_{n-2}(k)$ the \linedef{noncommutative motivic rational defect} of
the variety $Y$.

\begin{definition}
We say that a birational map $\phi: X \dashrightarrow Y$ has \linedef{rational centers} if there exists a weak factorization
$(b_1,c_1,\ldots,b_r,c_r)$ with $C_i$ rational for any $i=1,\ldots,r$.
For a given smooth and projective variety $X$, we define $R_X$ to be the subgroup of $\Bir(X)$ generated by birational 
maps with rational centers.
\end{definition}

\begin{corollary}\label{cor:rational-centers-dimn-4}
Let $\phi: X \dashrightarrow Y$ be a birational map with rational centers, and $n=\dim(X)$. Then $\mathrm{mcd}(\phi) \leq \max\{0,n-4\}$.
In particular, $R_X \subset \Bir_{n-4}(X)$ if $n\geq 4$, and $R_X=\Bir_0(X)$ if $n\leq3$.
\end{corollary}

\begin{proof}
By proposition \ref{prop:motivic-nv-obstruct} a rational variety $C$ of 
dimension $m$ has $\mathrm{mcd}(C) \leq m-2$. If follows then, that,
in the above assumptions, $\mathrm{mcd}(C_i) \leq \dim(C_i)-2 \leq n-4$
for any $i$, and the first statement is proved. Also, this proves that
$R_X$ is indeed contained in $\Bir_{n-4}(X)$ if $n \geq 4$.

The stronger statement in the case $n=3$ is just a rephrasing of the fact that
having rational centers is equivalent to having motivic dimension zero. Indeed, in this
case, the centers of a weak factorization are either points or smooth curves, and we know that
a curve $C$ has $\mathrm{mcd}(C)=0$ if and only if $C$ is rational.

The statement is trivial for $n \leq 2$.
\end{proof}

In the case of fourfolds, centers have dimension at most two.
A conjecture attributed to Orlov (see, e.g., \cite[Conj. 6.2.1]{auel-berna-frg})
claims that a smooth projective complex surface is rational if and only if it has a full
exceptional collection. We can formulate therefore the following Conjecture.

\begin{conjecture}\label{conj:orlov}
Suppose that $\dim(X) \leq 4$, and let $\phi: X \dashrightarrow X$ be a birational transformation.
Then $\mathrm{mcd}(\phi)=0$ if and only if $\phi$ has rational centers.
\end{conjecture}

We notice that Hochschild homology is certainly not fine enough to study the above Conjecture, as Proposition
\ref{prop:cat2surfaces} shows.

On the other hand, it is easy to see that there are rational threefolds $X$ such that $\mathrm{mcd}(X) = 1$, if
for example, $X$ satisfies $h^{1,2}(X)\neq 0$.
Suppose then that $\phi$ is a birational map of $\PP^n$ contracting some rational variety $Z$ with $\mathrm{mcd}(Z) > 0$.
One would be tempted to deduce that $\mathrm{mcd}(\phi)>0$, but the arguments used in the proof of Theorem \ref{prop:arealfiltration}
are deeply based on birational geometry. We formulate then the following question.

\begin{question}\label{quest:conject}
Let $n \geq 5$. Does there exist $\phi$ in $R_{\PP^n}$ such that $\mathrm{mcd}(\phi) > 0$?
\end{question}

\subsection{Toric centers, and the standard Cremona transformation}
Let $X$ be a smooth projective variety. We say that a birational transformation $\phi: X \dashrightarrow X$ has \linedef{toric centers}
if it admits a weak factorization $(b_1,c_1,\ldots,b_r,c_r)$ where all the 
$C_i$ are toric. By this, we mean that $C_i$ is abstractly isomorphic, as a smooth projective variety, to a toric
variety. We set $T_X$ to be the subgroup of $\Bir(X)$ generated by birational maps with toric centers.
For example, a birational map $\phi: X \dashrightarrow X$
admitting a weak factorization such that all the $C_i$ are projective spaces
has toric centers.
Notice however that $\phi$ having toric centers does not imply that $X$ itself is toric, nor 
that any of the blow-ups is a toric map.
By Kawamata \cite{kawamata-toric}, the derived category of any toric variety is generated by exceptional objects.
We therefore have the following easy remark.

\begin{proposition}\label{prop:toriccenters}
If $\phi: X \dashrightarrow Y$ has toric centers, then $\mathrm{mcd}(\phi)=0$.
In particular, $T_X \subset \Bir_0(X)$.
\end{proposition}

Notice that we do not know whether $\phi$ having toric centers implies that
$\phi^{-1}$ has toric centers. It is not difficult indeed to have non-toric varieties
whose derived categories are generated by exceptional objects, as for example any del Pezzo
surface of degree at most 5, or, more generally, any non-toric blow-up along points of a 
variety whose derived category is generated by exceptional objects.

A very well known example of a birational map with toric centers is given by the
standard Cremona transformation $\sigma_n: \PP^n \dashrightarrow \PP^n$ which admits a factorization:
$$\xymatrix{
& X \ar[dl]_b \ar[dr]^c \\
\PP^n \ar@{-->}[rr]^{\sigma_n} & & \PP^n
}$$
as follows: there are $n+1$ points in general position on $\PP^n$ naturally associated to $\sigma_n$. The map $c$
is the composition of the blow-ups of these points, followed by the blow-ups of the strict transforms of all the lines through two of them,
then by the blow-ups of the strict transforms of planes through three of them and so on, the last step being the blows-up of the strict transforms of
the $(n-2)$-dimensional linear subspaces spanned by all the possible choices of $n-1$ points. Notice that $X$ is also known as the
Losev--Manin space, a compactification of the moduli space of $(n+3)$-pointed curves of genus zero. It follows
that all the standard Cremona transformations have toric centers, and are indeed toric. We remark that here we could have
used that projective spaces have motivic categorical dimension $0$, without appealing to toric varieties.

We have the following Corollary of Proposition \ref{prop:toriccenters}.
We set $G_n:=\sod{\mathrm{PGL}_{n+1}(k),\sigma_n}$ as a subgroup of $\Bir(\PP^n)$.

\begin{corollary}\label{cor:cremona-is-in-bir0}
For any $n\geq 2$, we have
$$G_n \subset T_{\PP^n} \subset \mathrm{Bir}_0(\PP^n).$$
\end{corollary}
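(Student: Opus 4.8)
The plan is to reduce the statement to three facts that are already available by this point in the paper: that $\mathrm{Bir}_0(\PP^n)$ is a subgroup of $\mathrm{Bir}(\PP^n)$, that every projective linear automorphism lies in it, and that $\sigma_n$ lies in it. Since $G_n = \langle \mathrm{PGL}_{n+1}(k),\sigma_n\rangle$ is by definition the smallest subgroup of $\mathrm{Bir}(\PP^n)$ containing these generators, the three facts together immediately give $G_n \subset \mathrm{Bir}_0(\PP^n)$. So the whole argument is a matter of assembling pieces rather than proving anything new.

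First I would dispose of $\mathrm{PGL}_{n+1}(k)$. An element $g \in \mathrm{PGL}_{n+1}(k)$ is an isomorphism $\PP^n \to \PP^n$, so it admits the weak factorization $\PP^n \xleftarrow{\id} \PP^n \xrightarrow{g} \PP^n$ in which no blow-up occurs; the set of centers $C_{i,j}$ of the ``$c$''-maps is then empty, so by Definition \ref{def:cat-dim-of-birmap} this factorization has categorical dimension $0$, and hence $c\dim(g)=0$, i.e. $g \in \mathrm{Bir}_0(\PP^n)$. Next, for $\sigma_n$ I would simply invoke the explicit factorization through the Losev--Manin space recalled just above the statement of the corollary: the map $c$ is a composition of blow-ups whose centers are (strict transforms of) the linear subspaces $\PP^0, \PP^1, \ldots, \PP^{n-2}$ of $\PP^n$, and by the symmetry of the construction the same holds for $b$. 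Each such center is a projective space, hence toric, so $\sigma_n$ has toric loci in the sense of the paragraph preceding Proposition \ref{prop:toricloci}, and that proposition yields $c\dim(\sigma_n)=0$. Equivalently, and even more directly, $\mathrm{mcd}(\PP^m)=0$ for all $m$, so every center of the $c_i$'s in this particular factorization has motivic categorical dimension $0$; either way $\sigma_n \in \mathrm{Bir}_0(\PP^n)$.

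Finally, $\mathrm{Bir}_0(\PP^n)$ is a subgroup of $\mathrm{Bir}(\PP^n)$ by Theorem \ref{thm:main} applied with $d=0$ and $X=Y=\PP^n$ (the hypothesis $I(X)=I(Y)$ being trivially satisfied), so it contains the subgroup generated by $\mathrm{PGL}_{n+1}(k)$ and $\sigma_n$, which is exactly the assertion. I do not expect any genuine obstacle here: the entire content of the corollary is packaged into Proposition \ref{prop:toricloci} and into the explicit toric (Losev--Manin) factorization of $\sigma_n$, and the subgroup property is Theorem \ref{thm:main}. The only point that needs a moment's care is the convention built into Definition \ref{def:cat-dim-of-birmap}, namely that an empty collection of blown-up centers imposes the constraint $d=0$ (rather than leaving $c\dim$ undefined); this is what places all of $\mathrm{Aut}(\PP^n)$, and in particular $\mathrm{PGL}_{n+1}(k)$, inside $\mathrm{Bir}_0(\PP^n)$.
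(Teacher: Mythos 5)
Your argument is correct and is essentially the paper's own: the corollary is obtained by combining the toric (Losev--Manin) factorization of $\sigma_n$ with Proposition \ref{prop:toricloci} (or, as the paper also remarks, simply $\mathrm{mcd}(\PP^m)=0$), the trivial factorization for elements of $\mathrm{PGL}_{n+1}(k)$, and the subgroup property of $\mathrm{Bir}_0(\PP^n)$ from Theorem \ref{thm:main}. Your extra remark about the empty set of centers forcing $c\dim=0$ is a harmless clarification of Definition \ref{def:cat-dim-of-birmap} and does not change the argument.
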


We finally notice, as a possible application of Corollary \ref{cor:cremona-is-in-bir0},
that a positive answer to Question \ref{quest:conject} would imply that the group
generated by maps contracting rational varieties is strictly bigger than $G_n$, a result which
was proved by Blanc and Hed\'en \cite{blanc-heden} if $n \geq 3$ is odd.

\end{document}